

\documentclass[12pt, twoside, english, headsepline]{article}


\usepackage[margin=30mm]{geometry}

\usepackage{scrlayer-scrpage}  
\clearscrheadfoot                 
\pagestyle{scrheadings}           
\cehead[]{F. Cinque and E. Orsingher}        
\cohead[]{Analysis of fractional Cauchy problem and probabilistic applications}        
\cefoot[\pagemark]{\pagemark}  
\cofoot[\pagemark]{\pagemark}     

\usepackage{sectsty}
\sectionfont{\fontsize{14}{13}\selectfont}
\subsectionfont{\fontsize{12}{13}\selectfont}


\usepackage{tikz,graphicx}
\usepackage{amsmath,amsthm,amssymb,amsfonts, mathtools, amsbsy, dsfont}
\usepackage[utf8]{inputenx}
\usepackage{xcolor}
\usepackage{microtype}
\usepackage{hyperref}
\hypersetup{linkcolor=blue}


\newcommand*\dif{\mathop{}\!\mathrm{d}}  
\newcommand{\Conv}{\mathop{\scalebox{2.3}{\raisebox{-0.2ex}{$\ast$}}}}
\newcommand{\ceil}[1]{\left\lceil #1 \right\rceil}  
\newcommand*{\orcid}[1]{\href{https://orcid.org/#1}{ORCID: #1}}

\allowdisplaybreaks 

\newtheorem{theorem}{Theorem}[section] 
\newtheorem{proposition}{Proposition}[section] 
\newtheorem{lemma}[theorem]{Lemma}
\theoremstyle{definition} 
 
\newtheorem{example}{Example}[section]
\newtheorem{remark}{Remark}[section] 
\numberwithin{equation}{section} 


\providecommand{\keywords}[1]
{
  \small	
  \textbf{\textit{Keywords: }} #1
}
\providecommand{\MSC}[1]
{
  \small	
  \textit{2020 MSC: } #1   
}


\title{Analysis of fractional Cauchy problems with some probabilistic applications}
\author{Fabrizio Cinque$^1$ and Enzo Orsingher$^2$\\
        \small Department of Statistical Sciences, Sapienza University of Rome, Italy \\
        \small $^1$fabrizio.cinque@uniroma1.it, \orcid{0000-0002-9981-149X}\\ 
        \small$^2$enzo.orsingher@uniroma1.it
}




\begin{document}

\maketitle

\begin{abstract}
In this paper we give an explicit solution of Dzherbashyan-Caputo-fractional Cauchy problems related to equations with derivatives of order $\nu k$, for $k$ non-negative integer and $\nu>0$. The solution is obtained by connecting the differential equation with the roots of the characteristic polynomial and it is expressed in terms of Mittag-Leffler-type functions. Under the some stricter hypothesis the solution can be expressed as a linear combination of Mittag-Leffler functions with common fractional order $\nu$. We establish a probabilistic relationship between the solutions of differential problems with order $\nu/m$ and $\nu$, for natural $m$. Finally, we use the described method to solve fractional differential equations arising in the fractionalization of partial differential equations related to the probability law of planar random motions with finite velocities.
\end{abstract} \hspace{10pt}

\keywords{Dzherbashyan-Caputo derivative, Mittag-Leffler functions, Fourier transforms, Laplace transforms, Random motions.}

\MSC{Primary 34A08; Secondary 35R11, 60K99.}

\section{Introduction}

In this paper we consider fractional equations of the form
\begin{equation}\label{problemaIntroduzione}
\sum_{k=0}^N \lambda_k\frac{\partial^{\nu k} }{\partial t^{\nu k}}F(t, x) = 0,\ \ t\ge0,\ x\in \mathbb{R},\ \text{ with }\ \nu>0,
\end{equation}
where the roots of $\sum_{k=0}^N\lambda_k y^k = 0$ are different from $0$, and subject to the general initial conditions 
\begin{equation}\label{condizioniInizialiIntroduzione}
\frac{\partial^{l} F}{\partial t^{l}}\Big|_{t=0} = f_l(x),\ \ x\in \mathbb{R}^d,\ l=0,\dots,\ceil{N\nu} -1.
\end{equation}
The fractional derivatives are in the sense of Dzherbashyan-Caputo, that is, for $m\in \mathbb{N}_0$,
\begin{equation}\label{derivataCaputo}
\frac{\dif^\nu}{\dif t^\nu} f(t) = \begin{cases}
\begin{array}{l l}
\displaystyle\frac{1}{\Gamma(m-\nu)}\int_0^t (t-s)^{m-\nu-1}\frac{\dif^m}{\dif s^m}f(s)\dif s & \ \text{if}\ m-1<\nu<m\\[9pt]
\displaystyle\frac{\dif^m}{\dif t^m} f(t) &\ \text{if} \ \nu = m.
\end{array}\end{cases}
\end{equation}

We recall that the Laplace-transform of the fractional derivative of order $\nu>0$ can be expressed as, for suitable $\mu>0$,
\begin{equation}\label{trasformataLaplaceDerivataFrazionariaIntroduzione}
\int_0^\infty e^{-\mu t}\frac{\partial^\nu}{\partial t^\nu}f(t) \dif t= \mu^{\nu} \int_0^\infty e^{-\mu t}f(t) \dif t - \sum_{l=1}^{\ceil{\nu}} \mu^{\nu-l} \frac{\partial^{l-1}}{\partial t^{l-1}}f\Big|_{t=0}
\end{equation}
where we assume that $\lim_{t\longrightarrow \infty}  e^{-\mu t}\frac{\partial^{l-1}}{\partial t^{l-1}}f(t)=0,\ l\ge1$.

Dzherbashyan-Caputo fractional derivatives and the associated Cauchy problems have been intensively studied by many authors in the last decades, see for instance \cite{CM2008, KM2004, P1994} and the more recent papers such as \cite{K2016, M2019}. The main interest to such topic is arisen by their applications in several branches of science, such as physics and mechanics, see \cite{GKMR2014, P1999}. 

Fractional derivatives and the study of its related Cauchy problems also appear in the theory of stochastic processes. The main novelty of this work lies in the probabilistic relationship we establish between the solution of fractional Cauchy problems of different order and its application to the study of the fractional version of random motion with finite velocity.
\\

Our research aim is that of extending the results firstly presented in Orsingher and Beghin \cite{OB2004}, where the authors studied the time-fractional telegraph equation and the probabilistic interpretation of the solution. In particular, they were also able to prove that the probability law of the telegraph process subordinated with a reflecting Brownian motion satisfies the time-fractional differential equation
\begin{equation*}
\frac{\partial^{2\nu}u}{\partial t^{2\nu}} +2\lambda \frac{\partial^\nu u}{\partial t^\nu}  = c^2\frac{\partial^2u}{\partial x^2}, \ \ \ \text{with } \nu = \frac{1}{2},
\end{equation*}
subject to the initial condition $u(0,x)  =\delta(x)$  and $u_t(0,x)=0, \ x\in \mathbb{R}$. Later, these kinds of relationships were extended in a series of papers, see \cite{DoOT2014, OB2009}. In particular, in the paper by Orsingher and Toaldo \cite{OT2017} the authors studied the time-space-fractional equation
\begin{equation}\label{equazioneOrsingherToaldo}
\sum_{j=1}^m \lambda_j \frac{\partial^{\nu_j}u}{\partial t^{\nu_j}} = -c^2(-\Delta)^\beta,\ \ \ 0<\nu_j\le 1,\ \forall\ j,\ \beta \in (0,1],
\end{equation}
subject to the initial condition $u(0,x) = \delta(x),\ x\in \mathbb{R}^d$. In equation (\ref{equazioneOrsingherToaldo}), $-(-\Delta)^\beta$ denotes the fractional Laplacian (see \cite{K2017} for further details on this operator). The authors proved the relationship between this kind of equations and the probability law of an isotropic $d$-dimensional stable process, $S^{2\beta}$, subordinated with the inverse of a linear combination of independent stable processes, $L(t) = \inf\{s\ge0\,:\,\sum_{j=1}^m \lambda_j^{1/\nu_j} H_{\nu_j}(s)\ge t\}$, with $\lambda_j>0\ \forall\ j$ and $H_{\nu_j}$ stable processes of order $\nu_j\in(0,1)$.
\\

The novelty here is that the order of the Dzherbashyan-Caputo fractional derivatives appearing in (\ref{problemaIntroduzione}) can be arbitrarily large, although of the form $\nu k$. We point out that we state our main results in terms of ordinary fractional differential equations. Then, we are using this result to study partial fractional differential equations by means of the Fourier-transform approach.
\\

In Section 3, thanks to the use of the Laplace transform method, we show that the solution of the fractional Cauchy problem given by (\ref{problemaIntroduzione}) and (\ref{condizioniInizialiIntroduzione}) can be expressed as a combination of Mittag-Leffler-type functions with order of fractionality equal to $\nu>0$. Then we connect the solutions of problems with different \textit{order of fractionality} by means of a probability expectation such as, with $n\in\mathbb{N}$,
\begin{equation}\label{relazioneValoreAttesoIntroduzione}
F_{\nu/n}(t,x) = \mathbb{E}\,F_{\nu}\Biggl(\,\prod_{j=1}^{n-1}G_{j}^{(n)}(t),\,x\Biggr)
\end{equation}
where $F_{\nu/n}$ and $F_{\nu}$ are respectively the solution of a problem of degree $\nu/n$ and $\nu$ with suitable initial conditions and $G_{j}^{(n)}(t)$ are positive absolutely continuous random variables for each $t\ge0,\ j=1,\dots,n-1$ (see Section \ref{sottosezioneVariabiliAleatorieG} for details). 
The relationship (\ref{relazioneValoreAttesoIntroduzione}), where $F_{\nu/n}$ and $F_{\nu}$ are Fourier transforms of probability laws, leads to the equivalence (in terms of finite-dimensional distributions) of two processes, with the second one being time-changed through $\prod_{j=1}^{n-1}G_{j}^{(n)}(t)$.
\\

The problem we study in this paper was inspired by the fractionalization of the higher order partial differential equations governing the probability distribution of the position of random motion moving with a finite number of velocities. For instance, the fourth-order equation
\begin{equation}\label{equazioneQuartoOrdineIntro}
\Bigl(\frac{\partial^2 }{\partial t^2}+2\lambda\frac{\partial}{\partial t} +\lambda^2\Bigr)\biggl(\frac{\partial^2}{\partial t^2} +2\lambda\frac{\partial}{\partial t} -c^2\Bigl(\frac{\partial^2}{\partial x^2}+\frac{\partial^2}{\partial y^2}\Bigr)\biggr)p +c^4\frac{\partial^4 p}{\partial x^2\partial y^2} = 0,
\end{equation}
which emerges in the analysis of a planar stochastic dynamics with orthogonal-symmetrically chosen directions (see \cite{CO2023} for more details). The Fourier transforms of the equation (\ref{equazioneQuartoOrdineIntro}) has the form
\begin{equation}\label{equazioneQuartoOrdineFourierIntro}
\Bigl(\frac{\partial^2 }{\partial t^2}+2\lambda\frac{\partial}{\partial t} +\lambda^2\Bigr)\Bigl(\frac{\partial^2}{\partial t^2} +2\lambda\frac{\partial}{\partial t} +c^2(\alpha^2+\beta^2)\Bigr)F +c^4\alpha^2\beta^2F= 0,
\end{equation}
and its fractional version, with $\nu>0$, is
\begin{align}\label{equazioneQuartoOrdineFourierFrazionariaIntro}
\frac{\partial^{4\nu}F}{\partial t^{4\nu}} + 4\lambda \frac{\partial^{3\nu}F}{\partial t^{3\nu}}+5\lambda^2\frac{\partial^{2\nu}F}{\partial t^{2\nu}}+ 2\lambda^3\frac{\partial^\nu F}{\partial t^\nu} + c^2(\alpha^2+\beta^2)\Bigl(\frac{\partial^2 }{\partial t^2}+2\lambda\frac{\partial}{\partial t} +\lambda^2\Bigr)F+ c^4\alpha^2\beta^2 F= 0.
\end{align}
Equation (\ref{equazioneQuartoOrdineFourierFrazionariaIntro}) equivalently arises by considering the Fourier transform of the time-fractional version of equation (\ref{equazioneQuartoOrdineIntro}).
\\

In the last section of the paper we describe some applications of the theory constructed in Section 3 in the field of random motions with finite velocities. In detail, we study a method to derive the value of the (integer) derivatives of the Fourier transform (also called the characteristic function), in the time origin $t=0$, of the probability law of the position of the moving particle. Thanks to this result we can build the Cauchy problem solved by the characteristic function of a general motion and study its time-fractional counterpart. We provide two examples concerning planar random movements.

\section{Preliminary concepts}

\subsection{Convolutions of Mittag-Leffler-type functions}

The generalized Mittag-Leffler (GML) function, also known as three-parameter Mittag-Leffler fucntion, is a generalization of the exponential function. It has been first introduced by Prabhakar \cite{P1971} and is defined as
\begin{equation}\label{MittagLefflerGeneralizzata}
E_{\nu, \delta}^{\gamma} (x) = \sum_{k=0}^\infty \frac{\Gamma(\gamma+k)}{\Gamma(\gamma)\,k!}\frac{x^k}{ \Gamma(\nu k+ \delta)}, \ \ \ \ \ \nu, \gamma,\delta \in \mathbb{C}, Re(\nu), Re(\gamma), Re(\delta) >0, \ x\in \mathbb{R}.
\end{equation}
By considering $\gamma = 1$, (\ref{MittagLefflerGeneralizzata}) reduces to the well-known Mittag-Leffler function, see Pillai \cite{P1990}, Gorenflo \textit{et al.} \cite{GKMR2014}.
\\In this paper, as in many others, we are representing the solutions of fractional differential Cauchy problems in terms of Mittag-Leffler-type functions. These applications naturally appear in the fractional calculus, see Mainardi \cite{M2020}.

For our work it is useful to recall the Laplace transform of function (\ref{MittagLefflerGeneralizzata}),
\begin{equation}\label{inversaMittagLefferGeneralizzata}
\int_0^\infty e^{-\mu x}x^{\delta-1}E_{\nu,\delta}^{\gamma}(\beta x^{\nu}) \dif x = \frac{\mu^{\nu\gamma-\delta}}{(\mu^\nu-\beta)^\gamma}, \ \ \ \Big|\frac{\mu^\nu}{\beta}\Big|<1.
\end{equation}

Let $M \in \mathbb{N}$. Below we use the following multivariate analogue of the generalized Mittag-Leffler
\begin{equation}\label{GenMittagLefflerMulti}
E_{\nu,\delta}^{\gamma}(x) = \sum_{k_1,\dots,k_M=0}^\infty\,\prod_{j=1}^M\,\frac{\Gamma(\gamma_j+k_j)}{\Gamma(\gamma_j)\,k_j!}\,x_j^{k_j}\,\frac{1}{\Gamma\bigl(\nu\sum_{j=1}^Mk_j+\delta\bigr)},
\end{equation}
where $\gamma = (\gamma_1,\dots,\gamma_M)\in \mathbb{C}^M,\ \nu,\delta \in \mathbb{C}$, with $Re(\gamma_1),\dots,Re(\gamma_M),Re(\nu)>0$, and $x\in \mathbb{C}^M$. Function (\ref{GenMittagLefflerMulti}) is a particular case of the multivariate Mittag-Leffler introduced by Saxena \textit{et al.} \cite{SKRs2011} and used in Cinque \cite{C2022} to represent the distribution of the sum of independent generalized Mittag-Leffler random variables.

\begin{lemma}\label{lemmaConvMitLefGen}
Let $M \in \mathbb{N}$ and $t\ge0$. Also assume that $\gamma_1,\dots,\gamma_M\in \mathbb{C},\ \nu,\delta \in \mathbb{C}\setminus\{0\}$ such that $Re(\gamma_1),\dots,Re(\gamma_M),Re(\nu)>0$and $\eta_1\not=\dots\not = \eta_M\in\mathbb{C}$. Then,
\begin{equation}
\Biggl(\Conv_{j=1}^M x^{\delta_j-1}E_{\nu,\delta_j}^{\gamma_j}(\eta_j x^\nu)\Biggr) (t) = t^{\sum_{j=1}^M \delta_j-1} E_{\nu,\sum_{j=1}^M \delta_j}^{(\gamma_1,\dots,\gamma_M)}\Big(\eta_1t^\nu,\dots,\eta_Mt^\nu\Big)
\end{equation}
where the convolution is performed with respect to the variable $x\ge0$.
\end{lemma}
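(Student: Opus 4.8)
The natural route is the Laplace transform, since a convolution in the variable $x$ becomes an ordinary product and the single-factor transform is already recorded in \eqref{inversaMittagLefferGeneralizzata}. Writing $\mathcal{L}[\,\cdot\,](\mu)$ for the Laplace transform in $x$ and setting $\Delta=\sum_{j=1}^M\delta_j$, the convolution theorem together with \eqref{inversaMittagLefferGeneralizzata} gives, for $\mu$ with $|\mu^\nu|>\max_j|\eta_j|$,
\[
\mathcal{L}\Biggl[\Conv_{j=1}^M x^{\delta_j-1}E_{\nu,\delta_j}^{\gamma_j}(\eta_j x^\nu)\Biggr](\mu) = \prod_{j=1}^M \frac{\mu^{\nu\gamma_j-\delta_j}}{(\mu^\nu-\eta_j)^{\gamma_j}} = \frac{\mu^{\nu\sum_{j=1}^M\gamma_j-\Delta}}{\prod_{j=1}^M(\mu^\nu-\eta_j)^{\gamma_j}}.
\]
It then suffices to verify that the right-hand side of the claimed identity has this same transform.

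To that end I would expand the multivariate function \eqref{GenMittagLefflerMulti} as its defining series, multiply by $t^{\Delta-1}$, and integrate against $e^{-\mu t}$ term by term. Each monomial contributes $\int_0^\infty e^{-\mu t} t^{\nu\sum_j k_j+\Delta-1}\dif t = \Gamma\bigl(\nu\sum_j k_j+\Delta\bigr)\,\mu^{-\nu\sum_j k_j-\Delta}$, which cancels precisely the reciprocal gamma factor appearing in \eqref{GenMittagLefflerMulti}. The resulting multiple series factorizes, and each factor is the binomial series $\sum_{k\ge0}\frac{\Gamma(\gamma_j+k)}{\Gamma(\gamma_j)\,k!}(\eta_j/\mu^\nu)^{k}=(1-\eta_j/\mu^\nu)^{-\gamma_j}$, so that
\[
\mathcal{L}\Bigl[t^{\Delta-1}E_{\nu,\Delta}^{(\gamma_1,\dots,\gamma_M)}(\eta_1 t^\nu,\dots,\eta_M t^\nu)\Bigr](\mu) = \mu^{-\Delta}\prod_{j=1}^M\Bigl(1-\frac{\eta_j}{\mu^\nu}\Bigr)^{-\gamma_j} = \frac{\mu^{\nu\sum_{j=1}^M\gamma_j-\Delta}}{\prod_{j=1}^M(\mu^\nu-\eta_j)^{\gamma_j}},
\]
which matches the transform computed above. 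By the uniqueness of the Laplace transform on a right half-plane the two functions coincide, proving the lemma.

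The only step needing real care is the term-by-term integration, i.e. the interchange of the multiple sum over $(k_1,\dots,k_M)$ with the integral $\int_0^\infty e^{-\mu t}(\cdots)\dif t$. I expect to justify it by fixing $\mu$ in the region $|\mu^\nu|>\max_j|\eta_j|$, where every series above converges absolutely, and applying Fubini--Tonelli to the absolutely convergent multiple series; the identity is then extended to the whole domain by analytic continuation in $\mu$, both sides being analytic in $\mu^\nu$ off the points $\eta_j$. I would also note that the distinctness assumption $\eta_1\neq\cdots\neq\eta_M$ is not actually used in this argument — it enters only in the later partial-fraction decompositions — so the computation goes through verbatim even when some of the $\eta_j$ coincide.
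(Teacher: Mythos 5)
Your proof is correct, but it takes a genuinely different route from the paper's. The paper argues entirely in the time domain: it reduces the $M$-fold convolution to adjoining one factor at a time (an induction on $M$) and evaluates each single convolution term by term via the Beta integral
$\int_0^t(t-x)^{\nu k_0+\delta_0-1}x^{\nu\sum_{j\ge1}k_j+\delta-1}\dif x
=\Gamma(\nu k_0+\delta_0)\,\Gamma\bigl(\nu\sum_{j\ge1}k_j+\delta\bigr)\,t^{\nu\sum_{j\ge0}k_j+\delta_0+\delta-1}/\Gamma\bigl(\nu\sum_{j\ge0}k_j+\delta_0+\delta\bigr)$,
which produces the multivariate series \eqref{GenMittagLefflerMulti} directly for every $t\ge0$, with no transform inversion. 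You instead work in the Laplace domain: convolution theorem plus \eqref{inversaMittagLefferGeneralizzata} on one side, term-by-term integration and the binomial series $(1-\eta_j/\mu^\nu)^{-\gamma_j}$ on the other, and uniqueness of the Laplace transform to conclude. Your route is shorter and makes two structural facts transparent: that the $\delta_j$ enter only through their sum, and — as you correctly observe — that the distinctness of the $\eta_j$ is never used (the paper's proof does not use it either; it only matters later, in Proposition \ref{proposizioneConvoluzioneMittagLeffler}, where partial-fraction-type identities appear). The price is the analytic bookkeeping you flag: it is cleanest to take $\mu$ real and large, where $|\mu^\nu|=\mu^{Re(\nu)}\to\infty$, so that Tonelli applies (for complex $\nu$ the absolute series involves ratios $\Gamma\bigl(Re(\nu)K+Re(\textstyle\sum_j\delta_j)\bigr)/\bigl|\Gamma\bigl(\nu K+\sum_j\delta_j\bigr)\bigr|$, which grow at most geometrically in $K$ and are killed by $\mu^{-Re(\nu)K}$), to fix the principal branch of $(\mu^\nu-\eta_j)^{\gamma_j}$ for complex $\gamma_j$, and then to invoke uniqueness of the Laplace transform for locally integrable, exponentially bounded functions from agreement on a real half-line — after which the analytic continuation in $\mu$ you mention is not needed at all. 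Two small points worth recording: the convergence condition printed in \eqref{inversaMittagLefferGeneralizzata} is a typo and should read $|\eta_j/\mu^\nu|<1$, which is exactly the regime you use; and both your proof and the paper's tacitly require $Re(\delta_j)>0$ (the statement's hypotheses are garbled on this point) so that the convolutions and transforms converge at the endpoint $x=0$.
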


Note that the parameters $\delta_j$ appear just in terms of their summation $\sum_{j=1}^M\delta_j$, meaning that it does not matter how they are distributed among the factors of the convolution.

\begin{proof}
It is sufficient to show that for $n\in \mathbb{N}$ and suitable $\nu,\delta_0,\delta,\gamma_0,\dots,\gamma_n, \eta_0,\dots,\eta_n$,
$$ \Biggl(x^{\delta_0-1}E_{\nu,\delta_0}^{\gamma_0}(\eta_0x^\nu) \ast E_{\nu,\delta}^{(\gamma_1,\dots,\gamma_n)} (\eta_1x^\nu,\dots,\eta_n x^\nu)\Biggr)(t)= t^{\delta_0+\delta-1}E_{\nu,\delta_0+\delta}^{(\gamma_0,\gamma_1,\dots,\gamma_n)} (\eta_0 t^\nu,\eta_1t^\nu,\dots,\eta_n t^\nu).$$
Indeed,
\begin{align*}
\Biggl(&x^{\delta_0-1}E_{\nu,\delta_0}^{\gamma_0}(\eta_0x^\nu) \ast E_{\nu,\delta}^{(\gamma_1,\dots,\gamma_n)} (\eta_1x^\nu,\dots,\eta_n x^\nu)\Biggr)(t)\\
&=\sum_{k_0=0}^\infty \frac{\Gamma(\gamma_0+k_0)}{\Gamma(\gamma_0)\,k_0!} \eta_0^{k_0} \sum_{k_1,\dots,k_n = 0}^\infty\, \Biggl(\,\prod_{j=1}^n\,\frac{\Gamma(\gamma_j+k_j)}{\Gamma(\gamma_j)\,k_j!} \eta_j^{k_j}\Biggr) \int_0^t \frac{(t-x)^{\nu k_0+\delta_0-1}x^{\nu\sum_{j=1}^n k_j + \delta -1}}{\Gamma\bigl(\nu k_0+\delta_0\bigr)\Gamma\bigl(\nu\sum_{j=1}^n k_j+\delta\bigr)}\dif x\\
& =\sum_{k_0=0}^\infty \frac{\Gamma(\gamma_0+k_0)}{\Gamma(\gamma_0)\,k_0!} \eta_0^{k_0} \sum_{k_1,\dots,k_n = 0}^\infty\, \Biggl(\,\prod_{j=1}^n\,\frac{\Gamma(\gamma_j+k_j)}{\Gamma(\gamma_j)\,k_j!} \eta_j^{k_j}\Biggr) \frac{t^{\nu\sum_{j=0}^n k_j+\delta_0+\delta-1}}{\Gamma\bigl(\nu\sum_{j=0}^n k_j+\delta_0+\delta\bigr)}\\
& = \sum_{k_0,\dots,k_n = 0}^\infty\, \Biggl(\,\prod_{j=0}^n\,\frac{\Gamma(\gamma_j+k_j)}{\Gamma(\gamma_j)\,k_j!} \Bigl(\eta_j t^\nu\Bigr)^{k_j}\Biggr) \frac{t^{\delta_0+\delta-1}}{\Gamma\bigl(\nu\sum_{j=0}^n k_j+\delta_0+\delta\bigr)}.
\end{align*}

\end{proof}

For the convolution of $M$ two-parameters Mittag-Leffler functions we can derive an expression in terms of a linear combination of $M$ two-parameters Mittag-Leffler functions having all the same parameters.
\begin{proposition}\label{proposizioneConvoluzioneMittagLeffler}
Let $M \in \mathbb{N}$ and $t\ge0$. Also assume that $\gamma_1,\dots,\gamma_M\in \mathbb{C},\ \nu,\delta \in \mathbb{C}\setminus\{0\}$ such that $Re(\gamma_1),\dots,Re(\gamma_M),Re(\nu)>0$ and $\eta_1\not=\dots\not = \eta_M\in\mathbb{C}$. Then,
\begin{equation}\label{convoluzioneMittagLeffler}
\Biggl(\Conv_{i=1}^M x^{\delta_i-1}E_{\nu,\delta_i}(\eta_i x^\nu)\Biggr)(t) = t^{\sum_{h=1}^M \delta_h-1}\sum_{i=1}^M\frac{\eta_i^{M-1}}{\displaystyle\prod_{\substack{j=1\\j\not=i}}^M(\eta_i-\eta_j)} E_{\nu,\,\sum_{h=1}^M \delta_h}\bigl(\eta_it^\nu\bigr)
\end{equation}
where the convolution is performed with respect to the non-negative variable $x\ge0$.
\end{proposition}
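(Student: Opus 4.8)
The plan is to pass to Laplace transforms, where the convolution becomes a product and the whole statement collapses to a partial-fraction identity in a single variable. First I would specialize the transform formula (\ref{inversaMittagLefferGeneralizzata}) to $\gamma=1$, so that each factor of the convolution satisfies
\[
\int_0^\infty e^{-\mu x}\, x^{\delta_i-1}E_{\nu,\delta_i}(\eta_i x^\nu)\dif x = \frac{\mu^{\nu-\delta_i}}{\mu^\nu-\eta_i},
\]
valid for $\mu$ large enough that the transforms of all factors converge. By the convolution theorem the Laplace transform of the left-hand side of (\ref{convoluzioneMittagLeffler}) is then the product
\[
\prod_{i=1}^M \frac{\mu^{\nu-\delta_i}}{\mu^\nu-\eta_i} = \frac{\mu^{M\nu-\sum_{h=1}^M\delta_h}}{\prod_{i=1}^M(\mu^\nu-\eta_i)}.
\]

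Next I would factor out $\mu^{\nu-\sum_h\delta_h}$ and write the product as $\mu^{\nu-\sum_h\delta_h}\cdot (\mu^\nu)^{M-1}/\prod_{i=1}^M(\mu^\nu-\eta_i)$, then set $z=\mu^\nu$. Since the $\eta_i$ are pairwise distinct by hypothesis, the rational function $z^{M-1}/\prod_{i=1}^M(z-\eta_i)$ has $M$ simple poles and numerator degree $M-1$ strictly below the denominator degree $M$, so it admits a partial-fraction expansion $\sum_{i=1}^M A_i/(z-\eta_i)$ with no polynomial part. The residues come from the standard limit $A_i = \lim_{z\to\eta_i}(z-\eta_i)\,z^{M-1}/\prod_{j}(z-\eta_j) = \eta_i^{M-1}/\prod_{j\neq i}(\eta_i-\eta_j)$, which is exactly the coefficient appearing in (\ref{convoluzioneMittagLeffler}).

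Finally I would reassemble: the transform of the left-hand side equals $\sum_{i=1}^M A_i\,\mu^{\nu-\sum_h\delta_h}/(\mu^\nu-\eta_i)$, and by (\ref{inversaMittagLefferGeneralizzata}) with $\gamma=1$ and second parameter $\sum_h\delta_h$ each summand is the transform of $A_i\, t^{\sum_h\delta_h-1}E_{\nu,\sum_h\delta_h}(\eta_i t^\nu)$; inverting then yields the claim. As an alternative route, one may first invoke Lemma \ref{lemmaConvMitLefGen} with all $\gamma_j=1$ to rewrite the convolution as $t^{\sum_h\delta_h-1}E_{\nu,\sum_h\delta_h}^{(1,\dots,1)}(\eta_1t^\nu,\dots,\eta_Mt^\nu)$ and then identify this multivariate object with the stated linear combination, but that reduction still rests on the same partial-fraction computation. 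The only genuine obstacle is this partial-fraction step: the distinctness of the $\eta_i$ is needed so that all poles are simple and the residue formula applies, and one must verify that the numerator degree is low enough to rule out an entire-polynomial term, which would otherwise contribute a term absent from the right-hand side.
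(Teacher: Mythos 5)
Your proof is correct, but it takes a genuinely different route from the paper. The paper proves Proposition \ref{proposizioneConvoluzioneMittagLeffler} by induction on $M$: it takes the case $M=2$ from Orsingher and Beghin \cite{OB2004} as the induction base, convolves the $(M-1)$-fold result with one more factor directly at the level of power series, and then absorbs the extra terms using the vanishing identity (\ref{formulaPerCasoMolteplicitaUnitarie}), $\sum_{i=1}^M \eta_i^{n}/\prod_{j\neq i}(\eta_i-\eta_j)=0$ for $n\le M-2$. You instead work in the Laplace domain, where the $M$-fold convolution becomes the single rational function $\mu^{\nu-\sum_h\delta_h}\,(\mu^\nu)^{M-1}/\prod_{i=1}^M(\mu^\nu-\eta_i)$, and the simple-pole partial-fraction expansion in $z=\mu^\nu$ produces exactly the coefficients $\eta_i^{M-1}/\prod_{j\neq i}(\eta_i-\eta_j)$ as residues; this handles all $M$ at once, explains conceptually where the coefficients come from, and your observation that the numerator degree $M-1$ is strictly below $M$ (so no polynomial part arises) is precisely the point that in the paper's argument is carried by the identity (\ref{formulaPerCasoMolteplicitaUnitarie}) — the two facts are equivalent, since that identity expresses the matching of the low-order large-$z$ asymptotics of the two sides of the partial-fraction expansion. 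What the paper's induction buys is that it never leaves the realm of term-by-term series manipulation, so it needs no appeal to injectivity of the Laplace transform or to validity regions; what your route buys is brevity and transparency, at the mild cost of having to note that all factor transforms converge for $Re(\mu)$ large (which effectively uses $Re(\delta_i)>0$, implicit in the statement) and that the Laplace transform is injective on the relevant class of continuous functions, so the inversion in your final step is legitimate. Your remark that one could alternatively specialize Lemma \ref{lemmaConvMitLefGen} to $\gamma_j=1$ and then identify the multivariate Mittag-Leffler function with the linear combination is also apt — and it is in fact closer in spirit to how the paper later inverts transforms in Theorem \ref{teoremaMolteplicitaUnitarie} — but, as you say, it rests on the same algebraic reduction.
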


\begin{proof}
First we recall that for $n,M\in\mathbb{N}_0$ and $\eta_1\not=\dots\not=\eta_N\in \mathbb{C}\setminus\{0\}$,
\begin{equation}\label{formulaPerCasoMolteplicitaUnitarie}
\sum_{i=1}^M \frac{\eta_i^{n}}{\prod_{\substack{j=1\\j\not=i}}^M(\eta_i-\eta_j)} =0, \ \ \text{with }\ n\le M-2.
\end{equation}
Then, we also note that the right-hand side of formula (\ref{convoluzioneMittagLeffler}) can be also written as
\begin{equation}\label{convoluzioneMittagLefflerAlternativa}
t^{\sum_{h=1}^M \delta_h-1}\sum_{i=1}^M\frac{\eta_i^{M-1}}{\prod_{\substack{j=1\\j\not=i}}^M(\eta_i-\eta_j)} E_{\nu,\,\sum_{h=1}^M \delta_h}\bigl(\eta_it^\nu\bigr) = \sum_{k=0}^\infty \frac{ t^{\nu k + \sum_{h=1}^M \delta_h -1}}{\Gamma\bigl(\nu k + \sum_{h=1}^M \delta_h\bigr)}\sum_{i=1}^M \frac{\eta_i^{k+M-1}}{\prod_{\substack{j=1\\j\not=i}}^M(\eta_i-\eta_j) }.
\end{equation}
We now proceed by induction. The induction base (M=2) can be found in Orsingher and Beghin \cite{OB2004}. Now, assume that (\ref{convoluzioneMittagLeffler}) holds for $M-1$.
\begin{align}
\Biggl(&\Conv_{i=1}^M x^{\delta_i-1}E_{\nu,\delta_i}(\eta_i x^\nu)\Biggr)(t) \nonumber\\
& = \sum_{i=1}^{M-1} \frac{\eta_i^{M-2}}{\prod_{\substack{j=1\\j\not=i}}^{M-1}(\eta_i-\eta_j) } \int_0^t x^{\delta_M-1} E_{\nu, \delta_M}\bigl(\eta_Mx^\nu\bigr) (t-x)^{ \sum_{h=1}^{M-1} \delta_h-1}E_{\nu,  \sum_{h=1}^{M-1} \delta_h}\Bigl(\eta_i(t-x)^\nu\Bigr)\dif x\label{usoBaseInduttiva}\\
&= \sum_{i=1}^{M-1} \frac{\eta_i^{M-2}}{\prod_{\substack{j=1\\j\not=i}}^{M-1}(\eta_i-\eta_j) } \sum_{k=0}^\infty \frac{ t^{\nu k + \sum_{h=1}^M \delta_h -1}}{\Gamma\bigl(\nu k + \sum_{h=1}^M \delta_h\bigr)}\Bigl(\frac{\eta_i^{k+1}}{\eta_i-\eta_M}+\frac{\eta_M^{k+1}}{\eta_M-\eta_i}\Bigr)\nonumber\\
& = \sum_{i=1}^{M-1}\, \frac{\eta_i^{M-1}\, t^{\sum_{h=1}^M \delta_h-1}}{\prod_{\substack{j=1\\j\not=i}}^{M}(\eta_i-\eta_j)} \,E_{\nu,\,\sum_{h=1}^M \delta_h}\bigl(\eta_it^\nu\bigr)\nonumber\\
&\ \ \  - t^{\sum_{h=1}^M \delta_h-1}\, E_{\nu,\,\sum_{h=1}^M \delta_h}\bigl(\eta_Mt^\nu\bigr) \,\eta_M \,\sum_{i=1}^{M-1}\, \frac{\eta_i^{M-2}}{\prod_{\substack{j=1\\j\not=i}}^{M}(\eta_i-\eta_j)} \label{ultimoPassaggioConvoluzioneMittagLeffler}\\
& =  \sum_{i=1}^{M-1}\, \frac{\eta_i^{M-1}\, t^{\sum_{h=1}^M \delta_h-1}}{\prod_{\substack{j=1\\j\not=i}}^{M}(\eta_i-\eta_j)} \,E_{\nu,\,\sum_{h=1}^M \delta_h}\bigl(\eta_it^\nu\bigr) + t^{\sum_{h=1}^M \delta_h-1}\, E_{\nu,\,\sum_{h=1}^M \delta_h}\bigl(\eta_Mt^\nu\bigr)\frac{\eta_M^{M-1}}{\prod_{\substack{j=1\\j\not=i}}^{M}(\eta_i-\eta_j)} .\nonumber
\end{align}
where in step (\ref{usoBaseInduttiva}) we used the induction base (i.e. with $M=2$) written as in (\ref{convoluzioneMittagLefflerAlternativa}), and in step (\ref{ultimoPassaggioConvoluzioneMittagLeffler}) we suitably used formula (\ref{formulaPerCasoMolteplicitaUnitarie}).
\end{proof}

\subsection{Generalization of absolute normal distribution}\label{sottosezioneVariabiliAleatorieG}
In \cite{BO2003} the authors introduced the following absolutely continuous positively distributed random variables. Let $n\in \mathbb{N}$ and $y>0$,
\begin{equation}\label{leggeSingolaG}
P\{G_j^{(n)}(t)\in \dif y\} = \frac{y^{j-1}\dif y}{n^{\frac{j}{n-1}-1}t^{\frac{j}{n(n-1)}}\Gamma(j/n)}e^{-\frac{y^n}{(n^nt)^{\frac{1}{n-1}}}}, \ \ \ t>0,\ j =1,\ \dots, n-1.
\end{equation}

Note that in the case of $n=2$ we have only one element and $G^{(2)}_1(t) = |B(2t)|,\ t\ge0,$ with $B$ being a standard Brownian motion.

If $G_1^{(n)}, \dots, G_{n-1}^{(n)}$ are independent, then the joint density reads, with $y_1,\dots,y_{n-1}>0$,
\begin{equation}
P\Bigg\{\bigcap_{j=1}^{n-1} \big\{G_j^{(n)}(t)\in \dif y_j\big\}\Bigg\} =  \Bigl(\frac{n}{2\pi}\Bigr)^{\frac{n-1}{2}} \frac{1}{\sqrt{t}} \Biggl(\prod_{j=1}^{n-1}  y_j^{j-1}\,\dif  y_j\Biggr)\,e^{-( n^nt)^{\frac{-1}{n-1}}\sum_{j=1}^{n-1}y_j^n}.
\end{equation}

Let $t>0$ and $n\ge 2$. It is easy to derive that the Mellin-transform of distribution (\ref{leggeSingolaG}) reads, for $s>0$,
\begin{equation*}
\int_0^{\infty} y^{s-1} f_{G_j^{(n)} (t)}(y)\dif y = \Bigl(nt^{1/n}\Bigr)^{\frac{s-1}{n-1}}\frac{\Gamma\bigl(\frac{s+j-1}{n}\bigr)}{\Gamma\bigl(\frac{j}{n}\bigr)}, \ \ \ j=1,\dots, n-1.
\end{equation*}
In the independence case, the Mellin-transform of the density, $f_{G^{(n)}(t)}$, of the product $G^{(n)} (t) = \prod_{j=1}^{n-1} G_{j}^{(n)}(t)$, is, with $s>0$,
\begin{equation*}
\int_0^{\infty} y^{s-1} f_{G^{(n)} (t)}(y)\dif y =\prod_{j=1}^{n-1} \Bigl(nt^{1/n}\Bigr)^{\frac{s-1}{n-1}}\frac{\Gamma\bigl(\frac{s+j-1}{n}\bigr)}{\Gamma\bigl(\frac{j}{n}\bigr)} = \frac{t^{\frac{s-1}{n}}}{\Gamma\bigl(\frac{s-1}{n}+1\bigr)}\Gamma(s).
\end{equation*}
where in the last equality we used the following $n$-multiplication formula of Gamma function for $z = 1/n$ and $s/n$,
$$ \prod_{j=1}^{n-1}\Gamma\Bigl(z+\frac{j-1}{n}\Bigr)= \frac{(2\pi)^{\frac{n-1}{2}}n^{\frac{1}{2}-nz}\,\Gamma(nz)}{\Gamma\Bigl(z+\frac{n-1}{n}\Bigr)}.$$

\section{Fractional differential Cauchy problem}

In this section we derive an explicit formula for the solution to the fractional Cauchy problem given by (\ref{problemaIntroduzione}) and (\ref{condizioniInizialiIntroduzione}). Hereafter we are considering functions $f:[0,\infty)\times\mathbb{R}^d\longrightarrow \mathbb{R}$ such that $\lim_{t\longrightarrow \infty}  e^{-\mu t}\frac{\partial^{l-1}}{\partial t^{l-1}}f(t)=0\ \forall\ l$.

\begin{theorem}\label{teoremaGenerale}
Let $d,N\in \mathbb{N},\ \nu>0$ and $\lambda_0,\dots,\lambda_N\in \mathbb{R}$. If 
\begin{equation}\label{rappresentazionePolinomio}
\sum_{k=0}^N \lambda_k x^k = \prod_{j=1}^M(x-\eta_j)^{m_j} \ \ \text{with }\ \eta_1,\dots,\eta_M\in \mathbb{C}\setminus \{0\},
\end{equation}
then, the solution to the fractional Cauchy problem of parameter $\nu$
\begin{equation}\label{problemaGenerale}
\begin{cases}
\displaystyle\sum_{k=0}^N \lambda_k\frac{\partial^{\nu k} }{\partial t^{\nu k}}F(t, x) = 0,\ \ t\ge0,\ x\in \mathbb{R}^d\\[15pt]
\displaystyle\frac{\partial^{l} F}{\partial t^{l}}\Big|_{t=0} = f_l(x),\ \ x\in \mathbb{R}^d,\ l=0,\dots,\ceil{\nu N} -1,
\end{cases}
\end{equation}
is the function $F:[0,\infty)\times\mathbb{R}^d\longrightarrow \mathbb{R}$ given by
\begin{equation}\label{tesiGenerale}
F(t,x) = \sum_{l=0}^{\ceil{\nu N}-1} f_{l}(x) \sum_{k=k_l}^N \lambda_k\, t^{\nu(N-k)+l} \, E_{\nu, \,\nu(N-k)+l+1}^{(m_1,\dots,m_M)}\Big(\eta_1t^\nu, \dots, \eta_Mt^\nu\Big),
\end{equation}
with $k_l = \min\{k=1,\dots,N\,:\, \nu k> l\},\ l=0,\dots,\ceil{\nu N}-1$.
\end{theorem}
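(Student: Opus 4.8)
The plan is to attack the Cauchy problem (\ref{problemaGenerale}) by the Laplace transform in time, writing $\tilde F(\mu,x)=\int_0^\infty e^{-\mu t}F(t,x)\dif t$ and exploiting the factorization (\ref{rappresentazionePolinomio}). Applying (\ref{trasformataLaplaceDerivataFrazionariaIntroduzione}) to each summand $\partial_t^{\nu k}F$ — legitimate under the standing hypothesis $\lim_{t\to\infty}e^{-\mu t}\partial_t^{l-1}F=0$ — and inserting the data (\ref{condizioniInizialiIntroduzione}), I would turn each fractional derivative into $\mu^{\nu k}\tilde F-\sum_{l=1}^{\ceil{\nu k}}\mu^{\nu k-l}f_{l-1}(x)$. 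Summing against $\lambda_k$ and setting $P(y)=\sum_{k=0}^N\lambda_k y^k=\prod_{j=1}^M(y-\eta_j)^{m_j}$, the equation becomes the algebraic relation
$$\tilde F(\mu,x)\,P(\mu^\nu)=\sum_{k=1}^N\lambda_k\sum_{l=1}^{\ceil{\nu k}}\mu^{\nu k-l}f_{l-1}(x),$$
the term $k=0$ contributing nothing since $\ceil{0}=0$. Dividing by $P(\mu^\nu)=\prod_j(\mu^\nu-\eta_j)^{m_j}$ gives a closed expression for $\tilde F$.

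Second, I would reorganize the double sum so that the data factor out. Shifting the inner index and exchanging the order of summation, a fixed $l$ receives contributions exactly from those $k$ with $\ceil{\nu k}\ge l+1$, i.e. $\nu k>l$; this is precisely the condition defining $k_l=\min\{k=1,\dots,N:\nu k>l\}$ and is the source of the lower summation limit in (\ref{tesiGenerale}). I thus reach
$$\tilde F(\mu,x)=\sum_{l=0}^{\ceil{\nu N}-1}f_l(x)\sum_{k=k_l}^N\lambda_k\,\frac{\mu^{\nu k-l-1}}{\prod_{j=1}^M(\mu^\nu-\eta_j)^{m_j}},$$
so that everything reduces to inverting the elementary blocks $\mu^{\nu k-l-1}/\prod_j(\mu^\nu-\eta_j)^{m_j}$.

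The key tool is a Laplace transform for the multivariate function (\ref{GenMittagLefflerMulti}), which I would derive by feeding the single-variable formula (\ref{inversaMittagLefferGeneralizzata}) into Lemma \ref{lemmaConvMitLefGen}: since that lemma identifies the convolution of the single GML densities with $t^{\sum_j\delta_j-1}E_{\nu,\sum_j\delta_j}^{(\gamma_1,\dots,\gamma_M)}$, the convolution theorem yields
$$\int_0^\infty e^{-\mu t}t^{\sum_j\delta_j-1}E_{\nu,\sum_j\delta_j}^{(m_1,\dots,m_M)}(\eta_1t^\nu,\dots,\eta_Mt^\nu)\dif t=\prod_{j=1}^M\frac{\mu^{\nu m_j-\delta_j}}{(\mu^\nu-\eta_j)^{m_j}}.$$
Taking $\gamma_j=m_j$ and $\sum_j\delta_j=\nu(N-k)+l+1$, and using $\sum_j m_j=N$, the right-hand side is exactly $\mu^{\nu k-l-1}/\prod_j(\mu^\nu-\eta_j)^{m_j}$; inverting term by term then reproduces (\ref{tesiGenerale}). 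The parameters $\delta_j$ enter only through their sum, so this choice is unambiguous, matching the observation following Lemma \ref{lemmaConvMitLefGen}.

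The delicate step, and the one I expect to be the main obstacle, is verifying that (\ref{tesiGenerale}) actually meets the initial data. Expanding the block $\Phi_{l'}(t)=\sum_{k=k_{l'}}^N\lambda_k t^{\nu(N-k)+l'}E_{\nu,\nu(N-k)+l'+1}^{(m_1,\dots,m_M)}(\eta_1t^\nu,\dots,\eta_Mt^\nu)$ in powers of $t$, the exponents have the form $\nu q+l'$, $q\ge0$. The leading coefficient equals $\lambda_N/\Gamma(l'+1)=1/l'!$ because the factorization (\ref{rappresentazionePolinomio}) is monic, and the coefficients for $1\le q\le N-k_{l'}$ vanish by the convolution identity $\sum_{i=0}^q a_i\lambda_{N-q+i}=\delta_{q,0}$, where $\sum_r a_r z^r=\prod_j(1-\eta_jz)^{-m_j}$ is the reciprocal of the monic polynomial $\sum_n\lambda_{N-n}z^n=\prod_j(1-\eta_jz)^{m_j}$. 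Consequently the first genuine correction carries $t$-exponent $\nu(N-k_{l'}+1)+l'\ge\nu N$ (using $\nu(k_{l'}-1)\le l'$), which exceeds every admissible order $l\le\ceil{\nu N}-1<\nu N$. Hence $\partial_t^l\Phi_{l'}|_{t=0}=\delta_{l,l'}$, giving $\partial_t^lF|_{t=0}=f_l$; the differential equation itself then holds by reversing the Laplace computation, the inversion being unique within the assumed growth class. The heart of the argument is thus the interplay between the definition of $k_l$ and this low-order cancellation, arranged exactly so that the corrections do not disturb the prescribed initial conditions.
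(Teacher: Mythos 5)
Your proposal is correct and follows the same backbone as the paper's proof: Laplace transform in time via (\ref{trasformataLaplaceDerivataFrazionariaIntroduzione}), rearrangement of the double sum through $k_l$ exactly as in (\ref{trasformataLaplaceGenerale}), and inversion of the blocks $\mu^{\nu k-l}/\prod_{h}(\mu^{\nu}-\eta_h)^{m_h}$ by combining (\ref{inversaMittagLefferGeneralizzata}) with Lemma \ref{lemmaConvMitLefGen}. Two genuine differences are worth recording. First, where the paper enforces positivity of the ``$\delta$'' parameters through the ad hoc exponent decomposition (\ref{decomposizioneEsponente}) built on the indices $M_k$, you simply observe that Lemma \ref{lemmaConvMitLefGen} lets the $\delta_j$ enter only through their sum $\nu(N-k)+l+1>0$, so any positive partition (e.g. $\delta_j=m_j\bigl(\nu(N-k)+l+1\bigr)/N$) yields the product formula $\prod_{j}\mu^{\nu m_j-\delta_j}(\mu^{\nu}-\eta_j)^{-m_j}$ at once; this is cleaner than the paper's route, though you should state such a positive choice explicitly, since (\ref{inversaMittagLefferGeneralizzata}) requires $Re(\delta)>0$ and your text only calls the choice ``unambiguous.'' Second, you add a step the paper omits entirely: a direct verification that (\ref{tesiGenerale}) meets the initial data. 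Your cancellation argument is sound --- with $\sum_{q}a_qz^q=\prod_j(1-\eta_jz)^{-m_j}$ reciprocal to the reversed monic polynomial $\sum_n\lambda_{N-n}z^n$, the coefficient of $t^{\nu p+l'}$ in your block $\Phi_{l'}$ is $\sum_{i=0}^{p}a_i\lambda_{N-p+i}=\delta_{p,0}$ as long as $N-p\ge k_{l'}$, and the surviving corrections carry exponent at least $\nu(N-k_{l'}+1)+l'\ge\nu N>\ceil{\nu N}-1\ge l$, giving $\partial_t^l\Phi_{l'}|_{t=0}=\delta_{l,l'}$. This buys something the paper's proof does not: read literally, the paper's computation only shows that a solution within the assumed growth class must have Laplace transform (\ref{trasformataLaplaceGenerale}), leaving the converse (that the inverted expression actually solves the Cauchy problem) implicit, whereas your expansion settles the initial conditions outright --- though your final appeal to ``reversing the Laplace computation'' for the equation itself is exactly as informal as the paper's own implicit step.
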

Note that $k_0=1$ and $l-1< \nu k \le l$ for all $k_{l-1}\le k< k_l$. Formula (\ref{tesiGenerale}) can be also written inverting the sums into $ \sum_{k=1}^N \sum_{l=0}^{\ceil{\nu k}-1}$.

Condition (\ref{rappresentazionePolinomio}) implies that $\eta_1,\dots,\eta_M$ are the $M$ roots of the $N$-th order polynomial with coefficients $\lambda_0,\dots,\lambda_N$, respectively with algebraic molteplicity $m_1,\dots,m_M\ge1$.
In the case $M=N$, all the roots have algebraic molteplicity equal to 1 and the solution can be expressed in terms of a combination of Mittag-Leffler functions (see Theorem \ref{teoremaMolteplicitaUnitarie}).

\begin{proof}
By means of the $t$-Laplace transform, the differential equation in problem (\ref{problemaGenerale}) turns into, for $\mu\ge0$ (we use the notation $G(\mu,x) =\mathcal{L}(F)(\mu,x)=\int_0^\infty e^{-\mu t}F(t,x)\dif t$ and keep in mind formula (\ref{trasformataLaplaceDerivataFrazionariaIntroduzione}))
\begin{align}
0&= \mathcal{L}\Biggl(\sum_{k=0}^N \lambda_k\frac{\partial^{\nu k} }{\partial t^{\nu k}}F \Biggr) = \sum_{k=0}^N \lambda_k\,\mathcal{L}\Bigl(\frac{\partial^{\nu k} }{\partial t^{\nu k}}F\Bigr) =\lambda_0 G +\sum_{k=1}^{N}  \lambda_k \Bigl[\mu^{\nu k}G-\sum_{l=1}^{\ceil{\nu k}}\mu^{\nu k -l} f_{l-1}\Bigr], \nonumber
\end{align}
which gives
\begin{align}   \label{trasformataLaplaceGenerale}
G(\mu,x) &= \frac{\displaystyle \sum_{k=1}^{N} \lambda_k \sum_{l=1}^{\ceil{\nu k}}\mu^{\nu k -l} f_{l-1}(x)}{\displaystyle\sum_{k=0}^N \lambda_k \mu^{\nu k}} = \frac{\displaystyle\sum_{l=1}^{\ceil{\nu N}} f_{l-1}(x)  \sum_{k=k_{l-1}}^N \lambda_k\, \mu^{\nu k -l} }{\displaystyle\prod_{h=1}^M \bigl(\mu^{\nu}-\eta_h\bigr)^{m_h}},
\end{align}
where we used hypothesis (\ref{rappresentazionePolinomio}) and $k_{l-1}$ is defined in the statement.

We now compute the $\mu$-Laplace inverse of the functions $\mu^{\nu k - l}/ \prod_{h=1}^M \bigl(\mu^{\nu}-\eta_h\bigr)^{m_h}$, for $l=1,\dots,\ceil{N\nu}$ and $k = k_{l-1},\dots,N$, by properly applying formula (\ref{inversaMittagLefferGeneralizzata}).
Let us consider $M_k = \min\{n\,:\, \sum_{h=1}^n m_h \ge k\}$, therefore $M_1 = 1$ because $m_1\ge1$ and $M_N = M$. Clearly, $\sum_{h=1}^{M_k} m_h \ge k> \sum_{h=1}^{M_{k}-1}m_h$ and $m_{M_k} \ge k -\sum_{h=1}^{M_{k}-1}m_h$; clearly $\sum_{h=1}^M m_h = N$.
We can decompose $\nu k -l$ as follows (it is not the only way):
\begin{align}
\nu k - l&=\nu\Bigl(\sum_{h=1}^{M_k-1}m_h + k - \sum_{h=1}^{M_k-1}m_h\Bigr) - l\,\frac{\sum_{h=1}^M m_h}{N} \nonumber\\
&= \nu\sum_{h=1}^{M_k-1}m_h +\nu\Bigl( k - \sum_{h=1}^{M_k-1}m_h\pm \sum_{h=M_k}^{M}m_h\Bigr) - l\,\frac{\sum_{h=1}^M m_h}{N}\nonumber \\
&=\sum_{h=1}^{M_k-1}\Bigl(\nu m_h-l\frac{m_h}{N}\Bigr) + \Biggl[\nu m_{M_k}-\Bigl(\nu m_{M_k} -\nu k + \nu \sum_{h=1}^{M_k-1}m_h + l\frac{m_{M_k}}{N}\Bigr)\Biggr] \nonumber\\
&\ \ \ + \sum_{h=M_k+1}^{M}\Biggl[ \nu m_h -\Bigl(\nu m_h+l\frac{m_h}{N}\Bigr)\Biggr].\label{decomposizioneEsponente}
\end{align}
In view of (\ref{decomposizioneEsponente}) we can write, by denoting with $\mathcal{L}^{-1}$ the inverse $\mu$-Laplace transform operator, for $l=1,\dots,\ceil{N\nu}$ and $k = k_{l-1},\dots,N$
\begin{align}
\mathcal{L}^{-1}& \Biggl(\frac{\mu^{\nu k -l} }{\displaystyle\prod_{h=1}^M \bigl(\mu^{\nu}-\eta_h\bigr)^{m_h}}\Biggr)(t) \nonumber\\
&=\prod_{h=1}^{M_k-1} \mathcal{L}^{-1}\Biggl( \frac{\mu^{\nu m_h-l m_h/N}}{\bigl(\mu^{\nu}-\eta_h\bigr)^{m_h}}\Biggr)(t)\, \mathcal{L}^{-1}\Biggl(  \frac{\mu^{\nu m_{M_k}- \bigl(\nu \sum_{h=1}^{M_k}m_h -\nu k + l m_{M_k}/N\bigr)} }{\bigl(\mu^{\nu}-\eta_h\bigr)^{m_{M_k}}}\Biggr)(t) \nonumber\\
& \ \ \ \times \prod_{h=M_k+1}^M \mathcal{L}^{-1}\Biggl(  \frac{\mu^{- l m_h/N}}{\bigl(\mu^{\nu}-\eta_h\bigr)^{m_h}} \Biggr)(t)\label{passaggioAntitrasformazioneLaplace}\\
& = \Conv_{h=1}^{M_k-1} t^{l m_h/N -1} E_{\nu,\,l m_h/N}^{m_h}\bigl(\eta_ht^\nu\bigr)\ast t^{\nu \sum_{h=1}^{M_k}m_h -\nu k +l m_{M_k}/N-1} E_{\nu,\,\nu \sum_{h=1}^{M_k}m_h -\nu k + l m_{M_k}/N}^{m_{M_k}}\bigl(\eta_{M_k}t^\nu\bigr)\nonumber\\
&\ \ \ \ast\Conv_{h=1}^{M_k-1} t^{\nu m_h - l m_h/N -1} E_{\nu,\,\nu m_h+ lm_h/N}^{m_h}\bigl(\eta_ht^\nu\bigr)\nonumber\\
& = t^{\nu (N-k)+l-1}E_{\nu,\,\nu (N-k)+l}^{(m_1,\dots,m_M)}\bigl(\eta_1t^\nu,\dots,\eta_Mt^\nu\bigr),\label{inversioneLaplaceGenerale}
\end{align}
where in step (\ref{passaggioAntitrasformazioneLaplace}) we used (\ref{inversaMittagLefferGeneralizzata}) and in the last step 
we used Lemma \ref{lemmaConvMitLefGen}. Note that in step (\ref{passaggioAntitrasformazioneLaplace}) it is necessary to keep the ``$\delta$'' terms greater than $0$ (see (\ref{inversaMittagLefferGeneralizzata})) and this is the main reason of using the above decomposition of $\nu k -l$.
\\By combining (\ref{trasformataLaplaceGenerale}) and (\ref{inversioneLaplaceGenerale}) we readily obtain result (\ref{tesiGenerale}) (after the change of variable $l' = l-1$).
\end{proof}

\begin{remark}[Non-homogeneous equation]
Under the hypothesis of Theorem \ref{teoremaGenerale} we can easily study the Cauchy problem in the case of a non-homogeneous fractional equation. In details, for $g:\mathbb{R}\times\mathbb{R}^d\longrightarrow\mathbb{R}$, such that there exists the $t$-Laplace transform, the solution of
\begin{equation*}
\begin{cases}
\displaystyle\sum_{k=0}^N \lambda_k\frac{\partial^{\nu k} }{\partial t^{\nu k}}F(t, x) = g(t,x),\ \ t\ge0,\ x\in \mathbb{R}^d\\[15pt]
\displaystyle\frac{\partial^{l} F}{\partial t^{l}}\Big|_{t=0} = f_l(x),\ \ x\in \mathbb{R}^d,\ l=0,\dots,\ceil{\nu N} -1,
\end{cases}
\end{equation*}
reads
\begin{align}
&F(t,x) \label{tesiGeneraleNonOmogenea}\\
& =  \sum_{l=0}^{\ceil{N\nu}-1} f_{l}(x) \sum_{k=k_l}^N \lambda_k\, t^{\nu(N-k)+l} \, E_{\nu, \,\nu(N-k)+l+1}^{(m_1,\dots,m_M)}\big(\eta t^\nu\big)- \int_0^t g(t-y,x)\,y^{\nu N-1}E_{\nu, \nu N}^{(m_1,\dots,m_M)}\big(\eta y^\nu\big)\dif y,\nonumber
\end{align}
where $\eta = (\eta_1,\dots,\eta_M)$.

The above results easily follows by observing that formula (\ref{trasformataLaplaceGenerale}) becomes 
\begin{equation*}\label{trasformataLaplaceGeneraleNonOmogenea}
G(\mu,x)  = \frac{\displaystyle\sum_{l=1}^{\ceil{N\nu}} f_{l-1}(x)  \sum_{k=k_{l-1}}^N \lambda_k\, \mu^{\nu k -l} -\mathcal{L}(g)(\mu,x)}{\displaystyle\prod_{h=1}^M \bigl(\mu^{\nu}-\eta_h\bigr)^{m_h}}
\end{equation*}
and we observe that the $\mu$-Laplace inverse of the term concerning the function $g$ is
\begin{align*}
\mathcal{L}^{-1}\Biggl(\mathcal{L}(g)(\mu,x)\Bigl(\prod_{h=1}^M \bigl(\mu^{\nu}-\eta_h\bigr)^{m_h}\Bigr)^{-1} \Biggr)(t,x) &= \int_0^t g(t-y,x)\,y^{\nu N-1}E_{\nu, \nu N}^{(m_1,\dots,m_M)}\big(\eta y^\nu\big)\dif y\\
\end{align*}
 where we used that $\mathcal{L}^{-1}\Bigl(\Bigl(\prod_{h=1}^M \bigl(\mu^{\nu}-\eta_h\bigr)^{m_h}\Bigr)^{-1} \Bigr)(t,x) = t^{\nu N-1}E_{\nu, \nu N }^{(m_1,\dots,m_M)}\big(\eta t^\nu\big)$ (obtained by proceeding as shown for (\ref{inversioneLaplaceGenerale})).
 
Note that in the case of $g$ being constant with respect to the variable $t$, the last term of (\ref{tesiGeneraleNonOmogenea}) reads $-g(x)t^{\nu N}E_{\nu, \nu N +1}^{(m_1,\dots,m_M)}\big(\eta t^\nu\big)$.
\end{remark}

\begin{remark}
Consider the real sequence $\{\nu_n\}_{n\in \mathbb{N}}$ such that $\nu_n\longrightarrow\nu>0$ and $\ceil{\nu_n N}=\ceil{\nu N}>0\ \forall\ n$. Then, 
\begin{equation}\label{risultatoLimite}
F_{\nu}(t,x) = \lim_{n\to\infty}F_{\nu_n}(t,x),\ \ t\ge0,\ x\in \mathbb{R}^d,
\end{equation}
where $F_{\nu},F_{\nu_n}$ are respectively the solutions to the problem of parameter $\nu$ and $\nu_n\ \forall\ n$, with the same initial conditions. This means that we can connect the limit of the solutions (pointwise) to the ``limit'' of the Cauchy problems (where the initial conditions stay the same because $\ceil{\nu_n N}=\ceil{\nu N}\ \forall\ n$).

Result (\ref{risultatoLimite}) comes from the continuity of the function (\ref{GenMittagLefflerMulti}) with respect to the fractional parameter $\nu>0$. This can be seen as a consequence of the continuity of the Gamma function on the real half-line and a suitable application of the dominated convergence theorem.
\end{remark}

\begin{theorem}\label{teoremaSubordinazione}
Let $d,N,n\in \mathbb{N},\ \nu>0$. Let $\lambda_0,\dots,\lambda_N\in \mathbb{R}$ and $\eta_1,\dots,\eta_M\in \mathbb{C}\setminus \{0\}$ satisfying condition (\ref{rappresentazionePolinomio}).
Then, the solution $F_{\nu/n}$ of the problem of parameter $\nu/n$
\begin{equation}\label{problemaOrdineFrazione}
\begin{cases}
\displaystyle\sum_{k=0}^N \lambda_k\frac{\partial^{\nu k/n} }{\partial t^{\nu k/n}}F(t, x) = 0,\ \ t\ge0,\ x\in \mathbb{R},\\[15pt]
\displaystyle\frac{\partial^{l} F}{\partial t^{l}}\Big|_{t=0} = f_l(x),\ \ x\in \mathbb{R}^d,\ l=0,\dots,\ceil{\frac{N\nu}{n}} -1,
\end{cases}
\end{equation}
can be expressed as
\begin{equation}\label{subordinazioneGenerale}
F_{\nu/n}(t,x) = \mathbb{E}\,F_{\nu}\Biggl(\,\prod_{j=1}^{n-1}G_{j}^{(n)}(t),\,x\Biggr),
\end{equation}
where the $G_j^{(n)}(t)$ are the random variables introduced in Section \ref{sottosezioneVariabiliAleatorieG} and $F_{\nu}$ is the solution to a problem of parameter $\nu$ with suitable initial condition
\begin{equation}\label{problemaOrdineInteroAssociato}
\begin{cases}
\displaystyle\sum_{k=0}^N \lambda_k\frac{\partial^{\nu k} }{\partial t^{\nu k}}F(t, x) = 0,\ \ t\ge0,\ x\in \mathbb{R}\\[15pt]
\displaystyle\frac{\partial^{l} F}{\partial t^{l}}\Big|_{t=0} = 
\begin{cases}
f_l,\ \ l=hn, \ \text{with } h =0,\dots,\ceil{N\nu/n} -1,\\
0,\ \ otherwise.
\end{cases}
\end{cases}
\end{equation}
\end{theorem}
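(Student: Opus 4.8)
The plan is to compute both sides explicitly via Theorem \ref{teoremaGenerale} and then reconcile them through the integer moments of the product $G^{(n)}(t)=\prod_{j=1}^{n-1}G_j^{(n)}(t)$, whose Mellin transform is recorded in Section \ref{sottosezioneVariabiliAleatorieG}.

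First I would apply Theorem \ref{teoremaGenerale} to problem \eqref{problemaOrdineFrazione} of parameter $\nu/n$, obtaining
\[
F_{\nu/n}(t,x)=\sum_{l=0}^{\ceil{N\nu/n}-1}f_l(x)\sum_{k=k_l}^N\lambda_k\,t^{(\nu/n)(N-k)+l}\,E_{\nu/n,\,(\nu/n)(N-k)+l+1}^{(m_1,\dots,m_M)}\bigl(\eta_1t^{\nu/n},\dots,\eta_Mt^{\nu/n}\bigr),
\]
with $k_l=\min\{k:(\nu/n)k>l\}$. Applying the same theorem to the associated problem \eqref{problemaOrdineInteroAssociato} of parameter $\nu$, and using that only the data at positions $l=hn$ survive, gives
\[
F_\nu(s,x)=\sum_{h=0}^{\ceil{N\nu/n}-1}f_h(x)\sum_{k=k'_{hn}}^N\lambda_k\,s^{\nu(N-k)+hn}\,E_{\nu,\,\nu(N-k)+hn+1}^{(m_1,\dots,m_M)}\bigl(\eta_1s^\nu,\dots,\eta_Ms^\nu\bigr),
\]
with $k'_{hn}=\min\{k:\nu k>hn\}$. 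A preliminary consistency check is that each position $hn$ lies in the admissible range $\{0,\dots,\ceil{N\nu}-1\}$, which follows from $hn\le n(\ceil{N\nu/n}-1)<N\nu$.

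Next I would expand $F_\nu(s,x)$ through the series \eqref{GenMittagLefflerMulti}, so that each summand becomes $\frac{s^{p}}{\Gamma(p+1)}\prod_{j}\frac{\Gamma(m_j+k_j)}{\Gamma(m_j)\,k_j!}\,\eta_j^{k_j}$, where $p=\nu\bigl(N-k+\sum_jk_j\bigr)+hn$. Substituting $s=G^{(n)}(t)$ and taking the expectation reduces everything to the moments $\mathbb{E}[(G^{(n)}(t))^p]$, which by the Mellin transform of the product (set $s=p+1$ there) equal $\frac{t^{p/n}}{\Gamma(p/n+1)}\Gamma(p+1)$. The decisive observation is that the factor $\Gamma(p+1)$ cancels exactly the $1/\Gamma(p+1)$ from the Mittag-Leffler series, leaving
\[
\mathbb{E}\Bigl[\tfrac{(G^{(n)}(t))^p}{\Gamma(p+1)}\Bigr]=\frac{t^{p/n}}{\Gamma(p/n+1)},\qquad \frac{p}{n}=\frac{\nu}{n}\Bigl(N-k+\textstyle\sum_jk_j\Bigr)+h.
\]
Since $t^{p/n}=t^{(\nu/n)(N-k)+h}\prod_j t^{(\nu/n)k_j}$, resumming in $k_1,\dots,k_M$ reproduces exactly $t^{(\nu/n)(N-k)+h}E_{\nu/n,\,(\nu/n)(N-k)+h+1}^{(m_1,\dots,m_M)}(\eta_1t^{\nu/n},\dots,\eta_Mt^{\nu/n})$. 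Thus $\mathbb{E}F_\nu(G^{(n)}(t),x)$ matches $F_{\nu/n}(t,x)$ term by term, with $h$ playing the role of $l$; the index ranges agree because $\nu k>hn\iff(\nu/n)k>h$, i.e.\ $k'_{hn}=k_h$.

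The main obstacle is justifying the interchange of $\mathbb{E}$ with the (multi-)infinite Mittag-Leffler series. I would establish absolute convergence by dominating $\sum_{k_1,\dots,k_M}\prod_j\frac{\Gamma(m_j+k_j)}{\Gamma(m_j)\,k_j!}|\eta_j|^{k_j}\,\frac{\mathbb{E}[(G^{(n)}(t))^p]}{\Gamma(p+1)}=\sum_{k_1,\dots,k_M}\prod_j\frac{\Gamma(m_j+k_j)}{\Gamma(m_j)\,k_j!}|\eta_j|^{k_j}\,\frac{t^{p/n}}{\Gamma(p/n+1)}$ and noting that the reciprocal Gamma factor decays superexponentially in $\sum_jk_j$, so the multiple series converges and Fubini-Tonelli applies. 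Everything else is the bookkeeping of exponents and the cancellation identified above.
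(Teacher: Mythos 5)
Your proposal is correct, and it takes a genuinely different route from the paper's, though both ultimately rest on the Gauss multiplication formula for the Gamma function. The paper works forward from $F_{\nu/n}$: it uses the multiplication formula to derive the integral representation (\ref{rappresentazioneMitLefGen}), expressing the multivariate Mittag--Leffler function of order $\nu/n$ through the one of order $\nu$, substitutes this into the explicit solution (\ref{tesiGenerale}), and after the change of variables $w_j = y_j^n(n^nt)^{-1/(n-1)}$ recognizes the joint density of the independent $G_j^{(n)}(t)$; it then verifies, with the same index bookkeeping you use ($\tilde{k}_{nh}=k_h$), that the resulting integrand is the solution $F_\nu$ of the associated problem. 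You instead work backward from $\mathbb{E}\,F_\nu\bigl(\prod_{j=1}^{n-1}G_j^{(n)}(t),x\bigr)$, expanding the Mittag--Leffler series termwise and invoking the product-moment identity $\mathbb{E}\bigl[(G^{(n)}(t))^p\bigr]=\Gamma(p+1)\,t^{p/n}/\Gamma\bigl(\tfrac{p}{n}+1\bigr)$, i.e.\ the Mellin transform recorded in Section \ref{sottosezioneVariabiliAleatorieG} evaluated at $s=p+1$; the cancellation of $\Gamma(p+1)$ against the Mittag--Leffler denominator is exactly the right mechanism, and your exponent and index checks ($hn\le n(\ceil{N\nu/n}-1)<N\nu$ so the shifted data positions are admissible, and $\nu k>hn\iff(\nu/n)k>h$ so the lower summation limits agree) reproduce the paper's bookkeeping. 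What your route buys: the theorem collapses to a single moment identity plus a Tonelli argument, and you make the series--expectation interchange explicit (polynomial growth of the coefficients $\Gamma(m_j+k_j)/(\Gamma(m_j)\,k_j!)$ against superexponential decay of $1/\Gamma(p/n+1)$ in $\sum_j k_j$), an interchange the paper performs silently when deriving (\ref{rappresentazioneMitLefGen}). What the paper's route buys: the representation (\ref{rappresentazioneMitLefGen}) of the order-$\nu/n$ Mittag--Leffler function as a byproduct of independent interest, and a direct exhibition of the joint density of the $G_j^{(n)}(t)$ rather than only their product's moments --- though, since that Mellin formula is itself proved via the multiplication formula, the two arguments coincide at their core.
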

Note that the conditions of the problem (\ref{problemaOrdineFrazione})  of degree $\nu/n$ appear in the associated problem (\ref{problemaOrdineInteroAssociato}) in the derivative whose order is multiple of $n$, while the other initial conditions are assumed equal to $0$. We also point out that all the conditions of the original problem always appear in the related problem since $n\Bigl(\ceil{\nu N/n}-1\Bigr)\le \ceil{\nu N}-1$.

\begin{proof}
We begin by showing a possible way to express the multivariate Mittag-Leffler (\ref{GenMittagLefflerMulti}) of fractional order $\nu/n$ in terms of that of fractional order $\nu$. Remember that for the gamma function, with $z\in\mathbb{C}$ and $n\in\mathbb{N}$ we can write (thanks to the $n$-multiplication formula of the Gamma function)
\begin{align}
\Gamma\Bigl(z+\frac{n-1}{n}\Bigr)^{-1} &= \frac{\prod_{j=1}^{n-1}\Gamma\Bigl(z+\frac{j-1}{n}\Bigr)}{(2\pi)^{\frac{n-1}{2}}n^{\frac{1}{2}-nz}\Gamma(nz)} \nonumber\\
&= \frac{1}{(2\pi)^{\frac{n-1}{2}}n^{\frac{1}{2}-nz}\Gamma(nz)}\prod_{j=1}^{n-1} \int_0^\infty e^{-w_j} w_j^{z+\frac{j-1}{n}-1}\dif w_j.\label{espressioneFunzioneGamma}
\end{align}
 Let $x\in\mathbb{C}^M$ and $L,h>0$,
\begin{align}
E_{\frac{\nu}{n},\, \frac{\nu}{n}L+ h}^{(m_1,\dots,m_M)}(x) &= \sum_{k_1,\dots,k_M=0} \Biggl(	\prod_{j=1}^M \frac{\Gamma(m_j+k_j)}{\Gamma(m_j)\,k_j!}x_j^{k_j}\Biggr) \Gamma\Biggl(\frac{\nu}{n}\sum_{j=1}^M k_j+\frac{\nu}{n}L+h\Biggr)^{-1}  \label{primoPassaggioSubordinazioneGenerale}\\
& =  \sum_{k_1,\dots,k_M=0} \Biggl( \prod_{j=1}^M \frac{\Gamma(m_j+k_j)}{\Gamma(m_j)\,k_j!}x_j^{k_j}\Biggr) \frac{\Gamma\Bigl(\nu\sum_{j=1}^M k_j+\nu L+ n h - (n-1)\Bigr)^{-1}}{(2\pi)^{\frac{n-1}{2}}n^{\frac{1}{2}-\bigl(\nu\sum_{h=1}^M k_h+\nu L+n h -(n-1)\bigr)}}\nonumber\\
&\ \ \ \times\prod_{j=1}^{n-1} \int_0^\infty e^{-w_j} w_j^{\frac{1}{n}\bigl(\nu\sum_{h=1}^M k_h+\nu L +n h-n+j\bigr)-1}\dif w_j \nonumber\\
& = \frac{n^{\nu L+n(h-1)+1/2}}{(2\pi)^{\frac{n-1}{2}}}\int_0^\infty \cdots\int_0^\infty \prod_{j=1}^{n-1} e^{-w_j} w_j^{\frac{1}{n}\bigl(\nu L+n h -n+j\bigr)-1}\dif w_j \nonumber \\
& \ \ \ \times E_{\nu,\, \nu L+n(h-1)+1}^{(m_1,\dots,m_M)}\Bigl(x\,n^\nu \prod_{j=1}^{n-1} w_j^{\nu/n}\Bigr)\label{rappresentazioneMitLefGen}
\end{align}
where in (\ref{primoPassaggioSubordinazioneGenerale}) we used (\ref{espressioneFunzioneGamma}) with $z = \frac{\nu}{n}\sum_{j=1}^M k_j+ h+\frac{\nu}{n}L -\frac{n-1}{n}$.

Now we apply (\ref{rappresentazioneMitLefGen}) (with $h=l+1$ and $L= N-k$) to formula (\ref{tesiGenerale}) and derive result (\ref{subordinazioneGenerale}). Let us consider $\eta = (\eta_1,\dots,\eta_M)$ given in the hypotheses, then
\begin{align}
F_{\nu/n}(t,x)& = \sum_{l=0}^{\ceil{\frac{\nu N}{n}}-1} f_{l}(x) \sum_{k=k_{l}}^N \lambda_k\, t^{\frac{\nu}{n}(N-k)+l} \, E_{\frac{\nu}{n}, \,\frac{\nu}{n}(N-k)+l+1}^{(m_1,\dots,m_M)}\Big(\eta_1t^{\nu/n}, \dots, \eta_Mt^{\nu/n}\Big)\nonumber\\
& = \sum_{l=0}^{\ceil{\frac{\nu N}{n}}-1} f_{l}(x) \sum_{k=k_{l}}^N \lambda_k\, t^{\frac{\nu}{n}(N-k)+l}\,\frac{n^{\nu (N-k)+nl+1/2}}{(2\pi)^{\frac{n-1}{2}}}\int_0^\infty \cdots\int_0^\infty \Biggl(\prod_{j=1}^{n-1} \dif w_j\Biggr) \nonumber \\
& \ \ \ \times \Biggl(\prod_{j=1}^{n-1} e^{-w_j} \Biggr) \Biggl(\prod_{j=1}^{n-1} w_j^{\frac{1}{n}\bigl(\nu (N-k)+n l -n+j\bigr)-1}\Biggr) E_{\nu,\, \nu (N-k)+nl+1}^{(m_1,\dots,m_M)}\Biggl(\eta\Bigl(nt^{1/n} \prod_{j=1}^{n-1} w_j^{1/n}\Bigr)^\nu\Biggr)\nonumber\\
& = \Bigl(\frac{n}{2\pi}\Bigr)^{\frac{n-1}{2}} \frac{1}{\sqrt{t}}\int_0^\infty \cdots\int_0^\infty \Biggl(\prod_{j=1}^{n-1} \dif  y_j\Biggr)\Biggl(\prod_{j=1}^{n-1}  y_j^{j-1}\Biggr)\Biggl(\prod_{j=1}^{n-1} e^{-\frac{y_j^n}{( n^nt)^{\frac{1}{n-1}}}} \Biggr)\nonumber\\
&\ \ \ \times  \sum_{l=0}^{\ceil{\frac{\nu N}{n}}-1} f_{l}(x) \sum_{k=k_{l}}^N \lambda_k \Biggl( \prod_{j=1}^{n-1} y_j\Biggr)^{\nu (N-k)+nl} E_{\nu,\, \nu (N-k)+nl+1}^{(m_1,\dots,m_M)}\Biggl(\eta \prod_{j=1}^{n-1} y_j^\nu\Biggr) \label{rappresentazioneEsplicitaSoluzioneGradoFrazione}
\end{align}
where in the last step we used the change of variables
$$ nt^{1/n} \prod_{j=1}^{n-1} w_j^{1/n} = \prod_{j=1}^{n-1} y_j \iff w_j =\frac{y_j^n}{\bigl( n^nt\bigr)^{\frac{1}{n-1}}}, \ \forall\ j \implies \prod_{j=1}^{n-1} \dif w_j = \frac{\prod_{j=1}^{n-1} \dif  y_j\, y_j^{n-1}}{nt}$$
and we performed some simplifications.
\\At last, we show that the second line of (\ref{rappresentazioneEsplicitaSoluzioneGradoFrazione}) coincides with the time-changed solution $F_{\nu}\Bigl(\,\prod_{j=1}^{n-1}G_{j}^{(n)}(t),\,x\Bigr)$ of the associated problem (\ref{problemaOrdineInteroAssociato}). Let us denote with $\tilde{f}_l$ the function appearing in the $l$-th condition of problem (\ref{problemaOrdineInteroAssociato}) and with $\tilde{k}_l = \min\{k=1,\dots,N\,:\,\nu k> l\}$ for $l = 0,\dots, \ceil{\nu N}-1$. Then, the solution of the related Cauchy problem reads
\begin{equation*}\label{FinteraParte1}
F_\nu(s,x) = \sum_{l=0}^{\ceil{\nu N}-1} \tilde{f}_{l}(x) \sum_{k=\tilde{k}_{l}}^N \lambda_k\, s^{\nu(N-k)+l} \, E_{\nu, \,\nu(N-k)+l+1}^{(m_1,\dots,m_M)}\Big(\eta_1t^\nu, \dots, \eta_Ms^\nu\Big),
\end{equation*}
where the functions $\tilde{f}_{l}$ are identically null for $l \not = nh$ with $h = 0,\dots, \ceil{\nu N/n}-1$, therefore we can write (removing the indexes of the null terms and performing the change of variable $l = nh$)
\begin{equation*}\label{FinteraParte2}
F_\nu(s,x) = \sum_{h=0}^{\ceil{\frac{\nu N}{n}}-1} \tilde{f}_{nh}(x) \sum_{k=\tilde{k}_{nh}+1}^N \lambda_k\, s^{\nu(N-k)+nh} \, E_{\nu, \,\nu(N-k)+nh+1}^{(m_1,\dots,m_M)}\Big(\eta_1s^\nu, \dots, \eta_Ms^\nu\Big).
\end{equation*}
By observing that $\tilde{k}_{nh} = \min\{k=1,\dots,N: \nu k> nh\} = \min\{k=1,\dots,N: \nu k/n> h\} =k_{h} \ \forall \ h$, we obtain the last line of (\ref{rappresentazioneEsplicitaSoluzioneGradoFrazione}) by setting $s = \prod_{j=1}^{n-1}y_j $.
\end{proof}

\begin{remark}[Brownian subordination]
If $n=2$, formula (\ref{subordinazioneGenerale}) becomes
\begin{equation}\label{subordinazioneGeneraleBrowniano}
F_{\nu/2}(t,x) = \mathbb{E}\,F_{\nu}\Bigl(\,|B(2t)|,\,x\Bigr),
\end{equation}
with $B$ standard Brownian motion (see Section \ref{sottosezioneVariabiliAleatorieG}).
\\Furthermore, by keeping in mind (\ref{subordinazioneGeneraleBrowniano}) and iterating the same argument, we obtain  that
\begin{equation}\label{subordinazioneGeneraleBrownianoIterato}
F_{\nu/2^{n}}(t,x) = \mathbb{E}\,F_{\nu}\Bigl(\,|B_n(2|B_{n-1}(2|\cdots 2|B_1(2t)| \cdots |\,)\,|\,)\,|,\,x\Bigr),
\end{equation}
where $B_1,\dots,B_n$ are independent standard Brownian motions and $F_\nu$ solution of the associated problem of the form (\ref{problemaOrdineInteroAssociato}) with $2^n$ replacing $n$.
\end{remark}

\subsection{Algebraic multiplicities equal to 1}

In this section we restrict ourselves to the case where the characteristic polynomial in (\ref{rappresentazionePolinomio}) has all distinct roots. This hypothesis permits us to present a more elegant result than that of Theorem \ref{teoremaGenerale}.

\begin{theorem}\label{teoremaMolteplicitaUnitarie}
Let $d,N\in \mathbb{N},\ \nu>0$ and $\lambda_0,\dots,\lambda_N\in \mathbb{R}$. If 
\begin{equation}\label{rappresentazionePolinomioMolteplicitaUnitarie}
\sum_{k=0}^N \lambda_k x^k = \prod_{j=1}^N(x-\eta_j) \ \ \text{with }\ \eta_1,\dots,\eta_N\in \mathbb{C}\setminus \{0\},
\end{equation}
then, the solution to the fractional Cauchy problem
\begin{equation}\label{problemaDifferenziale}
\begin{cases}
\displaystyle\sum_{k=0}^N \lambda_k \frac{\partial^{\nu k} }{\partial t^{\nu k}}F(t, x) = 0,\ \ t\ge0,\ x\in \mathbb{R}\\[15pt]
\displaystyle\frac{\partial^{l} F}{\partial t^{l}}\Big|_{t=0} = f_l(x),\ \ x\in \mathbb{R}^d,\ l=0,\dots,\ceil{\nu N} -1,
\end{cases}
\end{equation}
is the function $F:[0,\infty)\times\mathbb{R}^d\longrightarrow \mathbb{R}$ given by
\begin{equation}\label{soluzioneProblemaCasoMolteplicitaUnitarie}
F(t,x) = \sum_{h=1}^N\sum_{l=0}^{\ceil{\nu N}-1} E_{\nu, l+1}\bigl(\eta_h t^\nu\bigr) f_{l}(x) \,t^{l}  \sum_{k=k_{l}}^N \frac{\lambda_k \,\eta_h^{k-1}}{\displaystyle\prod_{\substack{j=1\\j\not=h}}^N(\eta_h-\eta_j)},
\end{equation}
with $k_l = \min\{k=1,\dots,N\,:\, \nu k> l\},\ l=0,\dots,\ceil{\nu N}-1$.
\end{theorem}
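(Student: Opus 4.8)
The plan is to mimic the Laplace-transform argument of Theorem~\ref{teoremaGenerale}, taking advantage of the fact that with simple roots ($M=N$, all $m_j=1$) the denominator factorises into \emph{simple} poles that can be split by partial fractions. Specialising \eqref{trasformataLaplaceGenerale} to $m_h=1$ and relabelling $l-1$ as $l$, the $t$-Laplace transform of the solution reads
\[
G(\mu,x)=\frac{\sum_{l=0}^{\ceil{\nu N}-1} f_{l}(x)\sum_{k=k_{l}}^N \lambda_k\,\mu^{\nu k-l-1}}{\prod_{h=1}^N(\mu^\nu-\eta_h)}.
\]
As the $\eta_h$ are distinct, I would expand the reciprocal of the denominator, with $\mu^\nu$ playing the role of the indeterminate, as $\prod_{h=1}^N(\mu^\nu-\eta_h)^{-1}=\sum_{h=1}^N\bigl[\prod_{j\neq h}(\eta_h-\eta_j)\bigr]^{-1}(\mu^\nu-\eta_h)^{-1}$.

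Next I would compute the Laplace transform of the \emph{claimed} solution \eqref{soluzioneProblemaCasoMolteplicitaUnitarie} directly. By \eqref{inversaMittagLefferGeneralizzata} with $\gamma=1$, $\delta=l+1$ and $\beta=\eta_h$ one has $\mathcal{L}\bigl(t^{l}E_{\nu,l+1}(\eta_h t^\nu)\bigr)(\mu)=\mu^{\nu-l-1}/(\mu^\nu-\eta_h)$, so that the transform of the right-hand side of \eqref{soluzioneProblemaCasoMolteplicitaUnitarie} equals
\[
\sum_{h=1}^N \frac{1}{\prod_{j\neq h}(\eta_h-\eta_j)}\sum_{l=0}^{\ceil{\nu N}-1} f_l(x)\,\frac{\mu^{\nu-l-1}}{\mu^\nu-\eta_h}\sum_{k=k_l}^N \lambda_k\,\eta_h^{k-1}.
\]
It then suffices to verify that this coincides with $G(\mu,x)$, for by injectivity of the Laplace transform this identifies \eqref{soluzioneProblemaCasoMolteplicitaUnitarie} as the solution. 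Observe that here the arguments $\delta=l+1\ge 1$ are automatically positive, so---unlike in Theorem~\ref{teoremaGenerale}---no delicate splitting of the exponent is needed to remain within the range of validity of \eqref{inversaMittagLefferGeneralizzata}.

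After the partial-fraction expansion, matching the two transforms reduces, for each fixed $l$, to the rational identity
\[
\sum_{h=1}^N \frac{1}{\prod_{j\neq h}(\eta_h-\eta_j)}\sum_{k=k_l}^N \lambda_k\,\frac{\mu^{\nu k-l-1}-\eta_h^{\,k-1}\mu^{\nu-l-1}}{\mu^\nu-\eta_h}=0.
\]
The key algebraic step is the factorisation, with $z=\mu^\nu$,
\[
\frac{z^{k-1}-\eta_h^{\,k-1}}{z-\eta_h}=\sum_{i=0}^{k-2} z^{\,i}\,\eta_h^{\,k-2-i},
\]
by which the left-hand side above becomes $\sum_{k,i}\lambda_k\,\mu^{\nu(i+1)-l-1}\sum_{h=1}^N \eta_h^{\,k-2-i}\bigl[\prod_{j\neq h}(\eta_h-\eta_j)\bigr]^{-1}$. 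Since $i$ runs from $0$ to $k-2$ and $k\le N$, every occurring exponent satisfies $0\le k-2-i\le N-2$, whence each inner sum over $h$ vanishes by \eqref{formulaPerCasoMolteplicitaUnitarie} applied with $M=N$.

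I expect the crux of the argument to be exactly this cancellation: one must check that clearing the simple poles produces only powers of $\mu$ whose $\eta_h$-exponents stay $\le N-2$, so that \eqref{formulaPerCasoMolteplicitaUnitarie} applies, while the single surviving contribution reproduces precisely the factor $\eta_h^{k-1}$ appearing in \eqref{soluzioneProblemaCasoMolteplicitaUnitarie}. An entirely equivalent route would start from the solution \eqref{tesiGenerale} of Theorem~\ref{teoremaGenerale} with $m_j=1$ and use Proposition~\ref{proposizioneConvoluzioneMittagLeffler} together with Lemma~\ref{lemmaConvMitLefGen} to rewrite $E_{\nu,\delta}^{(1,\dots,1)}(\eta_1 t^\nu,\dots,\eta_N t^\nu)=\sum_{h=1}^N \eta_h^{N-1}\bigl[\prod_{j\neq h}(\eta_h-\eta_j)\bigr]^{-1}E_{\nu,\delta}(\eta_h t^\nu)$; expanding these Mittag-Leffler functions in power series, reindexing, and once more invoking \eqref{formulaPerCasoMolteplicitaUnitarie} to discard the finitely many low-order terms would again yield \eqref{soluzioneProblemaCasoMolteplicitaUnitarie}.
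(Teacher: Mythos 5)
Your proposal is correct, and it reaches (\ref{soluzioneProblemaCasoMolteplicitaUnitarie}) by a genuinely different route than the paper. Both arguments start from the same Laplace transform (\ref{trasformataLaplaceGenerale}) with simple poles, but the paper proceeds constructively: it inverts $\mu^{\nu k-l}/\prod_{h=1}^N(\mu^\nu-\eta_h)$ by splitting the exponent as $\nu k-l=\nu-l\frac{N-k+1}{N}+(k-1)\bigl(\nu-\frac{l}{N}\bigr)$ so that each factor fits (\ref{inversaMittagLefferGeneralizzata}) with positive ``$\delta$'', applies the convolution result of Proposition \ref{proposizioneConvoluzioneMittagLeffler} to get a combination of $E_{\nu,\,\nu(N-k)+l}(\eta_h t^\nu)$, and then lowers the second parameter via the identity (\ref{rappresentazioneMitLefSemplificata}), killing the spurious terms with (\ref{formulaPerCasoMolteplicitaUnitarie}). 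You instead \emph{verify} the claimed formula: partial fractions on $\prod_{h=1}^N(\mu^\nu-\eta_h)^{-1}$, the elementary transform $\mathcal{L}\bigl(t^{l}E_{\nu,l+1}(\eta_h t^\nu)\bigr)(\mu)=\mu^{\nu-l-1}/(\mu^\nu-\eta_h)$, and the telescoping factorisation $(z^{k-1}-\eta_h^{k-1})/(z-\eta_h)=\sum_{i=0}^{k-2}z^{i}\eta_h^{k-2-i}$, after which every mismatched term vanishes because the exponents $k-2-i\le N-2$ lie in the range of (\ref{formulaPerCasoMolteplicitaUnitarie}) --- exactly the same cancellation mechanism the paper invokes at step (\ref{passaggioSemplificazioneEccezionale}), here deployed in the transform domain rather than after inversion. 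Your computation is sound (for $k=1$ the telescoping sum is empty, consistently), and as you observe it sidesteps both the exponent-splitting device and the convolution machinery of Lemma \ref{lemmaConvMitLefGen} and Proposition \ref{proposizioneConvoluzioneMittagLeffler}; the price is that it is a verification resting on injectivity of the Laplace transform rather than a derivation, whereas the paper's heavier route produces the formula without presupposing it and reuses the apparatus already built for Theorem \ref{teoremaGenerale}. Your sketched second alternative --- specialising (\ref{tesiGenerale}) to $m_j=1$ and reducing the multivariate Mittag-Leffler function to the combination $\sum_{h=1}^N \eta_h^{N-1}\bigl[\prod_{j\not=h}(\eta_h-\eta_j)\bigr]^{-1}E_{\nu,\delta}(\eta_h t^\nu)$ --- is also viable and closer in spirit to the paper, though the paper in fact redoes the inversion from scratch rather than specialising Theorem \ref{teoremaGenerale}.
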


Note that in result (\ref{soluzioneProblemaCasoMolteplicitaUnitarie}) the fractional order $\nu$ influences only the fractional order of the Mittag-Leffler function (and the number of initial conditions), so the coefficients of the linear combination are constant (with respect to $\nu$). We point out that the series in (\ref{soluzioneProblemaCasoMolteplicitaUnitarie}) can be inverted becoming $\sum_{k=1}^N \sum_{l=0}^{\ceil{\nu k}-1}$.

\begin{proof}
First we note that, for $n\in \mathbb{N}_0$ and $l\in\mathbb{C}$,
\begin{equation}\label{rappresentazioneMitLefSemplificata}
E_{\nu,\,n\nu+l}(x) = \frac{E_{\nu,l}(x)}{x^n}-\sum_{j=1}^n \frac{x^{-j}}{\Gamma\bigl((n-j)\nu + l\bigr)}.
\end{equation}

Now, we proceed as in the proof of Theorem \ref{teoremaGenerale} and we perform the $t$-Laplace transform of the equation in problem (\ref{problemaDifferenziale}). In this case, formula (\ref{trasformataLaplaceGenerale}) reads 
\begin{equation}\label{trasformataLaplaceGeneraleNonOmogenea}
G(\mu,x)  = \frac{\displaystyle\sum_{l=1}^{\ceil{\nu N}} f_{l-1}(x)  \sum_{k=k_{l-1}}^N \lambda_k\, \mu^{\nu k -l}}{\displaystyle\prod_{h=1}^N \bigl(\mu^{\nu}-\eta_h\bigr)}.
\end{equation}
We now invert the functions $\mu^{\nu k -l} /\prod_{h=1}^N \bigl(\mu^{\nu}-\eta_h\bigr)$ for $l=1,\dots, \ceil{\nu N}$ and $k = k_{l-1}+1, \dots, N$. We note that 
$$\nu k - l = \nu - l\frac{N-k+1}{N} + (k-1)\Bigl(\nu- \frac{l}{N}\Bigr)$$
and therefore we write
\begin{align}
\mathcal{L}^{-1}& \Biggl(\frac{\mu^{\nu k -l} }{\displaystyle\prod_{h=1}^N \bigl(\mu^{\nu}-\eta_h\bigr)}\Biggr)(t) \nonumber\\
&=\mathcal{L}^{-1}\Biggl(  \frac{\mu^{\nu-l(N-k+1)/N} }{\mu^{\nu}-\eta_1}\Biggr)(t)\,\prod_{h=2}^{k} \mathcal{L}^{-1}\Biggl( \frac{\mu^{\nu -l/N}}{\mu^{\nu}-\eta_h}\Biggr)(t)\, \prod_{h=k+1}^N\mathcal{L}^{-1}\Bigl(  \frac{1 }{\mu^{\nu}-\eta_h}\Bigr)(t) \nonumber\\
& = t^{l(N-k+1)/N-1}E_{\nu,l\frac{N-k+1}{N}}\bigl(\eta_1 t^\nu\bigr)\ast\Conv_{h=2}^{k} t^{l/N -1} E_{\nu,\,\frac{l }{N}}\bigl(\eta_ht^\nu\bigr) \ast\Conv_{h=k+1}^{N} t^{\nu-1}E_{\nu,\,\nu }\bigl(\eta_ht^\nu\bigr)\label{passaggioConvoluzioneMittagLeffler}\\
& = \sum_{h=1}^N \frac{t^{\nu(N-k)+l-1}\eta_h^{N-1}}{\prod_{\substack{j=1\\j\not=h}}^N (\eta_h-\eta_j)} E_{\nu, \nu(N-k) +l}\bigl(\eta_h t^\nu\bigr)\label{passaggioRappresentazioneMitLef}\\
& = \sum_{h=1}^N \frac{t^{l-1}\eta_h^{k-1}}{\prod_{\substack{j=1\\j\not=h}}^N (\eta_h-\eta_j)} E_{\nu, l}\bigl(\eta_h t^\nu\bigr) -  \sum_{i=1}^{N-k}\frac{t^{\nu(N-k-i)+l-1}}{\Gamma\bigl(\nu(N-k-i)+l-1\bigr)} \sum_{h=1}^N \frac{\eta_h^{N-1-i}}{\prod_{\substack{j=1\\j\not=h}}^N (\eta_h-\eta_j)}\label{passaggioSemplificazioneEccezionale}\\
& =  \sum_{h=1}^N \frac{t^{l-1}\eta_h^{k-1}}{\prod_{\substack{j=1\\j\not=h}}^N (\eta_h-\eta_j)} E_{\nu, l}\bigl(\eta_h t^\nu\bigr), \label{inversioneFattoreCasoMolteplicitaUnitarie}
\end{align}
where in step (\ref{passaggioConvoluzioneMittagLeffler}) we used Proposition \ref{proposizioneConvoluzioneMittagLeffler}, in step (\ref{passaggioRappresentazioneMitLef}) we used (\ref{rappresentazioneMitLefSemplificata})  and changed the order of the sums in the second term, and in step (\ref{passaggioSemplificazioneEccezionale}) we used formula (\ref{formulaPerCasoMolteplicitaUnitarie}) (note that $N-i-1\le N- 2$ for each $i = 1,\dots,N-k$).
Finally, with formula (\ref{inversioneFattoreCasoMolteplicitaUnitarie}) at hand, the inversion of (\ref{trasformataLaplaceGeneraleNonOmogenea}) yields the claimed result (\ref{soluzioneProblemaCasoMolteplicitaUnitarie}) (after the change of variable $l'=l-1$).
\end{proof}

We observe that in the case where all the initial conditions are equal to null functions, except the first one, result (\ref{soluzioneProblemaCasoMolteplicitaUnitarie}) simplifies into
\begin{equation}\label{soluzioneProblemaCasoMolteplicitaUnitarieCondizioniNulle}
F(t,x) = \sum_{h=1}^N E_{\nu, 1}\bigl(\eta_h t^\nu\bigr) f_{0}(x)  \sum_{k=1}^N \frac{\lambda_k \,\eta_h^{k-1}}{\prod_{\substack{j=1\\j\not=h}}^N(\eta_h-\eta_j)}.
\end{equation}

\begin{remark}[Integer derivatives]
From Theorem \ref{teoremaMolteplicitaUnitarie}, by setting $\nu=1$ in (\ref{problemaDifferenziale}), we obtain the general solution to the integer order differential Cauchy problem. In particular, under the condition (\ref{rappresentazionePolinomioMolteplicitaUnitarie}), we can write
\begin{equation}\label{soluzioneProblemaInteroCasoMolteplicitaUnitarie}
F(t,x) = \sum_{h=1}^N e^{\eta_h t}\sum_{l=0}^{N-1}f_{l}(x) t^l\sum_{k=l+1}^N \frac{\lambda_k \,\eta_h^{k-1-l}}{\displaystyle\prod_{\substack{j=1\\j\not=h}}^N(\eta_h-\eta_j)}.
\end{equation}
Note that in this case $k_l=l+1\ \forall\ l$. Furthermore, for $l\ge1$, we can write 
\begin{equation}\label{MittagLefflerEsponenziale}
E_{1, l+1}(x) =\frac{1}{x^{l}}\Biggl(e^x- \sum_{i=0}^{l-1}\frac{x^i}{i!}\Biggr).
\end{equation}
In light of (\ref{MittagLefflerEsponenziale}), formula (\ref{soluzioneProblemaCasoMolteplicitaUnitarie}) can be written as
\begin{align}
F(t,x) &=  \sum_{h=1}^N\sum_{l=0}^{N-1} \Biggl(e^{\eta_h t^\nu}- \sum_{i=0}^{l-1}\frac{(\eta_h t)^i}{i!}\Biggr)\frac{f_{l}(x) t^l}{\eta_h^{l}} \sum_{k=k_l}^N \frac{\lambda_k \,\eta_h^{k-1}}{\displaystyle\prod_{\substack{j=1\\j\not=h}}^N(\eta_h-\eta_j)} \nonumber\\
& = \sum_{h=1}^N e^{\eta_h t^\nu}\sum_{l=0}^{N-1}  f_{l}(x) t^l  \sum_{k=l+1}^N \frac{\lambda_k \,\eta_h^{k-l-1}}{\displaystyle\prod_{\substack{j=1\\j\not=h}}^N(\eta_h-\eta_j)} - \sum_{l=0}^{N-1}f_{l}(x)\sum_{i=0}^{l-1}\frac{t^{i+l}}{i!}\sum_{k=l+1}^N \lambda_k \sum_{h=1}^N\frac{\eta_h^{k-l-1+i}}{\displaystyle\prod_{\substack{j=1\\j\not=h}}^N(\eta_h-\eta_j)}\nonumber
\end{align}
and the last term is equal to $0$ because the last sum is always null thanks to formula (\ref{formulaPerCasoMolteplicitaUnitarie}) (in fact, $k-l-1+i\le k-l-1+(l-1)\le k-2\le N-2$ ).

Finally, we observe that in the case of null initial conditions, except the first one, formula (\ref{soluzioneProblemaInteroCasoMolteplicitaUnitarie}) coincides with the solution (\ref{soluzioneProblemaCasoMolteplicitaUnitarieCondizioniNulle}) (where $\nu>0$) with the exponential function replacing the Mittag-Leffler function.
\end{remark}

\begin{remark}
We point out that the result in Theorem \ref{teoremaSubordinazione} can be directly proved also from formula (\ref{soluzioneProblemaCasoMolteplicitaUnitarie}). In particular, the case with $\nu/n = 1/n$ follows by suitably applying the following representation of the Mittag-Leffler function, with $h\in \mathbb{N}$,
\begin{align*}
E_{1/n, h}(x) &= \sqrt{\frac{n}{(2\pi)^{n-1}}} \frac{1}{x^{n(h-1)}}\int_0^\infty \cdots \int_0^\infty  \Biggl(\prod_{j=1}^{n-1}e^{-y_j} y_j^{j/n-1} \dif y_j \Biggr) \Biggl( e^{nx\bigl(\prod_{j=1}^{n-1}y_j\bigr)^{1/n}} \\
&\ \ \ - \sum_{i=0}^{n(h-1)-1} \Bigl(nx\prod_{j=1}^{n-1}y_j^{1/n}\Bigr)^i\frac{1}{i!}\Biggr),
\end{align*}
which in the case of $n=2$, after the change of variable $y_1= y^2$, can be written as
$$ E_{1/2, h}(x) =  \frac{2x^{2(1-h)}}{\sqrt{\pi}}\int_0^\infty e^{-y^2}\Biggl( e^{2xy} - \sum_{i=0}^{2h-3}\frac{ (2xy)^i}{i!}\Biggr) \dif y.$$
The above formulas can be derived as formula (2.9) of \cite{BO2003}.
\end{remark}

\section{Application to random motions with finite velocity}

Let $\bigl(\Omega, \mathcal{F},\{\mathcal{F}_t\}_{t\ge0}, P\bigr)$ be a filtered probability space and $d \in \mathbb{N}$. In the following we assume that every random object is suitably defined on the above probability space (i.e. if we introduce a stochastic process, this is adapted to the given filtration). 

Let $N$ be a homogeneous Poisson process with rate $\lambda>0$ and consider the vectors $v_0,\dots, v_M\in\mathbb{R}^d$. Let $V$ be a stochastic process taking values in $\{v_0,\dots,v_M\}\ a.s.$ and such that, for $t\ge0$,
$$p_k = P\{V(0)=v_k\},\ \  P\{V(t+\dif t) = v_k\,|\,V(t) = v_h,\, N(t,t+\dif t]=1\} = p_{hk},\ \ \ h,k = 0,\dots,M.$$
We say that $V$ is the process describing the velocity of the associated random motion $X$ defined as
\begin{equation}\label{definizioneMoto}
X(t) = \int_0^t V(s)\dif s = \sum_{i=0}^{N(t)-1} \bigl(T_{i+1} -T_i\bigr) V(T_i) + \bigl(t-T_{N(t)}\bigr) V(T_{N(t)}), \ \ t\ge0,
\end{equation}
where $T_i$ denotes the $i$-th arrival time of $N$ and $V(T_i)$ denotes the random speed after the $i$-th event recorded by $N$, therefore after the potential switch occurring at time $T_i$. The stochastic process $X$ describes the position of a particle moving in a $d$-dimensional (real) space with velocities $v_0, \dots, v_M$ and which can change its current velocity only when the process $N$ records a new event.

\subsection{Initial conditions for the characteristic function}

Denote with $\gamma_{hk}^{(t)}:[0,1]\longrightarrow \mathbb{R}^d$ the segment between $v_h t$ and $v_k t$, that is $\gamma_{hk}^{(t)}(\delta) = v_ht\delta + v_kt(1-\delta),\ \delta\in [0,1]$. Now, it is easy to see that the distribution of the position $X(t)$, conditionally on the occurrence of one Poisson event in $[0,t]$ and the two different velocities taken at time $0$ (say $v_h$) and $t$ (say $v_k$), is uniformly distributed on the segment between the two velocities at time $t$ (that is $\gamma_{hk}^{(t)}$). In formulas, for $h\not=k=0,\dots,M$,
\begin{equation}
P\{X(t)\in \dif x\,|\,V(0) =v_h, V(t) = v_k, N[0,t]=1\} = \frac{\dif x}{||v_h-v_k||t},\ \ \text{with }x\in \gamma_{hk}^{(t)}.
\end{equation}

Then, for $t\ge0$ in the neighborhood of $0$ we observe that there can occur at maximum one Poisson event and therefore not more than one change of velocity for the motion $X$. Thus, we can write the Fourier transform of the distribution of $X(t)$, for $\alpha \in \mathbb{R}^d$ (with $<\cdot,\cdot>$ denoting the dot product in $\mathbb{R}^d$),
\begin{align}
\mathbb{E} e^{i\,<\alpha, X(t)>} &= \bigl(1-\lambda t\bigr)\sum_{k=0}^M p_k\, e^{i\,<\alpha,v_kt>}+ \lambda t\sum_{k=0}^M p_k \,p_{kk}e^{i\,<\alpha,v_kt>}\nonumber\\
&\ \ \  +\lambda t\sum_{\substack{h,k=0\\h\not=k}}^M p_h\, p_{hk} \int_{\gamma_{hk}^{(t)}}\frac{e^{i\,<\alpha,x>}}{||v_h-v_k||t}\dif x\nonumber\\
& = \bigl(1-\lambda t\bigr)\sum_{k=0}^M p_k e^{it\,<\alpha,v_k>}+ \lambda t \sum_{h,k=0}^M p_h\, p_{hk}\int_0^1 e^{it\,<\alpha,\,v_h\delta + v_k(1-\delta)>}\dif \delta.\label{trasformataFourierIntorno0}
\end{align}
By means of ($\ref{trasformataFourierIntorno0})$ we easily derive the values of the derivatives of the Fourier transform of the distribution of the position $X(t)$ in the neighborhood of $0$ and therefore also in $t=0$, which will be used as initial conditions for the Cauchy problem.
We point out that function (\ref{trasformataFourierIntorno0}) is based on the first order approximation of the probability mass of the Poisson process in the neighborhood of $t=0$. However, this approximation is sufficient to provide the characteristic function in the neighborhood of $0$; in fact, the probability law of random motions with finite velocities are derived by requiring only the knowledge at the first order, therefore we do not need a further expansion to obtain the higher order derivatives (in $t=0$).
\\

In detail we obtain, with $n\in \mathbb{N}_0, \ \alpha\in \mathbb{R}^d$, for $t$ sufficiently close to $0$,
\begin{align}
&\frac{\partial^{n} }{\partial t^{n}}\mathbb{E} e^{i\,<\alpha, X(t)>}\nonumber\\
&= \sum_{k=0}^M p_k e^{it<\alpha, v_k>} \Bigl[-n\lambda + (1-\lambda t)i<\alpha,v_k>\Bigr] \bigl(i<\alpha,v_k>\bigr)^{n-1}\nonumber\\
&\ \ \  +\lambda\sum_{h,k=0}^M p_h\, p_{hk}\int_0^1 e^{it\,<\alpha,\,v_h\delta + v_k(1-\delta)>}\Bigl[n+it<\alpha,v_h\delta + v_k(1-\delta)>\Bigr]\bigl(i<\alpha,\,v_h\delta + v_k(1-\delta)>\bigr)^{n-1}\dif \delta,
\end{align}
which in $t= 0$ simplifies into
\begin{align}
\frac{\partial^{n} }{\partial t^{n}}\mathbb{E} e^{i\,<\alpha, X(t)>}\Big|_{t=0}&= \sum_{k=0}^M p_k \Bigl[-n\lambda + i<\alpha,v_k>\Bigr] \bigl(i<\alpha,v_k>\bigr)^{n-1}\nonumber\\
&\ \ \ +n\lambda\sum_{h,k=0}^M p_h\, p_{hk}\int_0^1\bigl(i<\alpha,\,v_h\delta + v_k(1-\delta)>\bigr)^{n-1}\dif \delta.\label{condizioneInizialeDerivataNesima}
\end{align}
For derivatives of order $0,1,2$ we can write, with $\alpha\in \mathbb{R}^d$,
\begin{align}
\mathbb{E} e^{i\,<\alpha, X(0)>} =1 ,\ \ \  &\label{derivataZeroMotoAleatorio}\\
\frac{\partial }{\partial t}\mathbb{E} e^{i\,<\alpha, X(t)>}\Big|_{t=0}\ &= i<\alpha,\sum_{k=0}^Mp_k v_k>,\label{derivataPrimaMotoAleatorio}\\
\frac{\partial^2 }{\partial t^2}\mathbb{E} e^{i\,<\alpha, X(t)>}\Big|_{t=0} &\nonumber\label{derivataSecondaMotoAleatorio}\\
= -2\lambda i&<\alpha,\sum_{k=0}^Mp_k v_k>- \sum_{k=0}^M p_k <\alpha,v_k>^2 +\lambda i<\alpha,\sum_{h,k=0}^Mp_hp_{hk} (v_h + v_k)>.
\end{align}
Formula (\ref{derivataZeroMotoAleatorio}) is due to the fact that the particle performing the random motion is always assumed to be in the origin of $\mathbb{R}^d$ at time $t=0$. It is interesting to observe that the first derivative, given in (\ref{derivataPrimaMotoAleatorio}), is equal to $0$ for all $\alpha\in\mathbb{R}^d$ if and only if $\sum_{k=0}^M p_k v_k=0$.

\begin{example}[Orthogonal planar random motion]

We consider a random motion $(X,Y)$ governed by a homogeneous Poisson process $N$ with rate $\lambda>0$, moving in the plane with the following orthogonal velocities,
\begin{equation}\label{direzioniMotoOrtogonale}
v_k=\Biggl(c\cos\Bigl(\frac{k\pi}{2}\Bigr), c\sin\Bigl(\frac{k\pi}{2}\Bigr)\Biggr),\ \ c>0  \text{ with } k=0,1,2,3,
\end{equation}
and such that from velocity $v_k$ the particle can uniformly switch either to $v_{k-1}$ or $v_{k+1}$, that is $P\{V(T_{n+1}) = v_{k+1} \,|\,V(T_n ) = v_k\} = P\{V(T_{n+1}) = v_{k-1} \,|\,V(T_n ) = v_k\} =1/2,\ k=0,1,2,3$. Therefore, the particle whose motion is described by $(X,Y)$ lies in the square $S_{ct} = \{(x,y)\in\mathbb{R}^2\,:\,|x|+|y|\le ct\}$ at time $t>0$ and at each Poisson event take a direction orthogonal to the current one (see Figure \ref{MPS_1}). We refer to \cite{CO2023} (and references herein) for further details on planar orthogonal random motions and \cite{CO2023b} for its three-dimensional version.

\begin{figure}
	\begin{minipage}{0.5\textwidth}
		\centering
		\begin{tikzpicture}[scale = 0.6]
		\draw[dashed, gray] (4,0) -- (0,4) node[above right, black, scale = 0.9]{$ct$};
		\draw[dashed, gray] (0,4) -- (-4,0) node[above left, black, scale = 0.9]{$-ct$};
		\draw[dashed, gray] (-4,0) -- (0,-4) node[below left, black, scale = 0.9]{$-ct$};
		\draw[dashed, gray] (0,-4) -- (4,0) node[above right, black, scale = 0.9]{$ct$};
		\draw[->, thick, gray] (-5,0) -- (5,0) node[below, scale = 1, black]{$\pmb{X(t)}$};
		\draw[->, thick, gray] (0,-5) -- (0,5) node[left, scale = 1, black]{ $\pmb{Y(t)}$};
		\draw (0,0)--(0.5,0)--(0.5,1)--(-0.3,1)--(-0.3,2)--(-1,2)--(-1,2.2);
		\filldraw (0,0) circle (0.8pt); \filldraw (0.5,0) circle (0.8pt); \filldraw (0.5,1) circle (1pt); \filldraw (-0.3,1) circle (0.8pt); \filldraw (-0.3,2) circle (0.8pt); \filldraw (-1,2) circle (0.8pt);
		\draw (0,0)--(0,-0.4)--(0.5,-0.4)--(0.5,-1.6)--(2.4,-1.6);
		\filldraw (0,0) circle (0.8pt); \filldraw(0,-0.4) circle (0.8pt); \filldraw (0.5,-0.4) circle (0.8pt); \filldraw (0.5,-1.6) circle(0.8pt);
		\draw (0,0)--(-0.8,0)--(-0.8,-2.2)--(-0.6,-2.2)--(-0.6,-2.6)--(-0.2,-2.6);
		\filldraw(-0.8,0) circle (0.8pt); \filldraw (-0.8,-2.2) circle (0.8pt); \filldraw(-0.6,-2.2) circle(0.8pt); \filldraw(-0.6,-2.6) circle (0.8pt);
		\draw (0,0)--(0,0.8)--(-1,0.8)--(-1,0.9)--(-1.4,0.9)--(-1.8,0.9)--(-1.8, -0.2);
		\filldraw(0,0.8)circle (0.8pt); \filldraw(-1,0.8)circle (0.8pt); \filldraw(-1,0.9)circle (0.8pt); \filldraw(-1.8,0.9)circle (0.8pt); 
		\end{tikzpicture}
		\caption{\small Sample paths of an orthogonal \newline planar motion.}\label{MPS_1}
	\end{minipage}\hfill
	\begin{minipage}{0.5\textwidth}
		\centering
		\begin{tikzpicture}[scale = 0.5]
		\draw[dashed, gray] (-3,5.196) -- (6,0) node[above right, black, scale = 0.9]{$ct$};
		\draw[dashed, gray] (6,0) -- (-3,-5.196) node[above left, black, scale = 0.9]{};
		\draw[dashed, gray] (-3,-5.196) -- (-3,5.196) node[below left, black, scale = 0.9]{};
		\draw (-3,0) node[above left, scale =1.1]{$-\frac{ct}{2}$};
		\draw (0,5.196) node[right, scale =1.1]{$\frac{\sqrt{3}ct}{2}$};
		\draw (0,-5.196) node[right, scale =1.1]{$-\frac{\sqrt{3}ct}{2}$};
		\draw[->, thick, gray] (-4.5,0) -- (7,0) node[below, scale = 1, black]{$\pmb{X(t)}$};
		\draw[->, thick, gray] (0,-5.5) -- (0,6) node[left, scale = 1, black]{ $\pmb{Y(t)}$};
		\draw (0,0)--(1.3,0)--(0.9,-0.693)--(-0.2,1.212)--(1, 1.212)--(1.3,1.212)--(1.2,1.0388);
		\filldraw (0,0) circle (1pt); \filldraw (1.3,0) circle (1pt); \filldraw (0.9,-0.693)circle (1pt); \filldraw (-0.2,1.212) circle (1pt); \filldraw (1,1.212) circle (1pt); \filldraw (1.3,1.212) circle (1pt); \filldraw (1.2,1.0388) circle (1pt);
		\draw (0,0)--(-1,1.732)--(-2.5,4.33)--(-1.5, 4.33);
		\filldraw (0,0) circle (1pt); \filldraw (-1,1.732) circle (1pt); \filldraw(-2.5,4.33) circle (1pt); \filldraw(-1.5, 4.33) circle (1pt);
		\draw (0,0)--(-0.75,-1.299)--(-0.25,-1.299)--(-1.25,-3.031)--(-0.45,-3.031)--(-1.05,-1.9918);
		\filldraw(-0.75,-1.299) circle (1pt); \filldraw (-0.25,-1.299) circle (1pt); \filldraw(-1.25,-3.031) circle(1pt); \filldraw(-0.45,-3.031)circle (1pt); \filldraw (-1.05,-1.9918) circle(1pt);
		\end{tikzpicture}
		\caption{\small Sample paths of a planar motion with three directions.}
		\label{MP3}
	\end{minipage}\hfil
\end{figure}

The probability distribution $p(x,y)\dif x \dif y = P\{X(t)\in\dif x, Y(t)\in\dif y\},\ t\ge0,\ x,y \in Q_{ct},$ of the position of the motion $(X,Y)$ satisfies the fourth-order differential equation
\begin{equation}\label{equazioneQuartoOrdine}
\Bigl(\frac{\partial^2 }{\partial t^2}+2\lambda\frac{\partial}{\partial t} +\lambda^2\Bigr)\biggl(\frac{\partial^2}{\partial t^2} +2\lambda\frac{\partial}{\partial t} -c^2\Bigl(\frac{\partial^2}{\partial x^2}+\frac{\partial^2}{\partial y^2}\Bigr)\biggr)p +c^4\frac{\partial^4 p}{\partial x^2\partial y^2} = 0,
\end{equation}
and it is known that the current position $\bigl(X(t),Y(t)\bigr)$ can be represented as a linear combination of two independent telegraph processes, In details, for $t\ge0$,
\begin{equation}\label{decomposizioneMotoOrtogonale}
\begin{cases}
X(t) = U(t) + V(t),\\
Y(t) = U(t) - V(t),
\end{cases}
\end{equation}
where $U=\{U(t)\}_{t\ge0}$ and $V=\{V(t)\}_{t\ge0}$ are independent one-dimensional telegraph processes moving with velocities $\pm c/2$ and with rate $\lambda/2$ (note that a similar results holds in the case of a non-homogeneous Poisson process as well, see \cite{CO2023}). 

The Fourier transforms of the equation (\ref{equazioneQuartoOrdine}) has the form
\begin{equation}\label{equazioneQuartoOrdineFourier}
\Bigl(\frac{\partial^2 }{\partial t^2}+2\lambda\frac{\partial}{\partial t} +\lambda^2\Bigr)\Bigl(\frac{\partial^2}{\partial t^2} +2\lambda\frac{\partial}{\partial t} +c^2(\alpha^2+\beta^2)\Bigr)F +c^4\alpha^2\beta^2F= 0,
\end{equation}
and by means of formulas 
(\ref{derivataZeroMotoAleatorio}), (\ref{derivataPrimaMotoAleatorio}), (\ref{derivataSecondaMotoAleatorio}) and (\ref{condizioneInizialeDerivataNesima}) the initial conditions are 
\begin{equation}\label{condizioniMotoPianoDirezioniOrtogonali}
F(0,\alpha,\beta) = 1,\ \ F_t(0,\alpha,\beta) =0,\ \ F_{tt}(0,\alpha,\beta) =-\frac{c^2}{2}\bigl(\alpha^2+\beta^2\bigr), \ \ F_{ttt}(0,\alpha,\beta) =\frac{\lambda c^2}{2}\bigl(\alpha^2+\beta^2\bigr).
\end{equation}
Now, the fractional version of equation (\ref{equazioneQuartoOrdineFourier}), written in the form (\ref{problemaDifferenziale}), with $\nu>0$, is
\begin{align}
\frac{\partial^{4\nu}F}{\partial t^{4\nu}} + 4\lambda \frac{\partial^{3\nu}F}{\partial t^{3\nu}}+\Bigl(5\lambda^2+c^2(\alpha^2+\beta^2)\Bigr)\frac{\partial^{2\nu}F}{\partial t^{2\nu}}&+ 2\lambda\Bigl(\lambda^2+c^2(\alpha^2+\beta^2)\Bigr)\frac{\partial^\nu F}{\partial t^\nu}\nonumber\\
&  + c^2\Bigl(\lambda^2(\alpha^2+\beta^2)+c^2\alpha^2\beta^2\Bigr) F= 0. \label{equazioneQuartoOrdineFourierFrazionaria}
\end{align}
Let $A = \sqrt{\lambda^2-c^2(\alpha^2-\beta^2)}$ and $B = \sqrt{\lambda^2-c^2(\alpha^2+\beta^2)}$. Note that $c^2(\alpha^2+\beta^2) = \lambda^2-\bigl(A^2+B^2\bigr)/2$. Then, the following equality holds  
\begin{align*}
x^4&+ 4\lambda x^3+\Bigl(5\lambda^2+c^2(\alpha^2+\beta^2)\Bigr)x^2+ 2\lambda\Bigl(\lambda^2+c^2(\alpha^2+\beta^2)\Bigr)x + c^2\Bigl(\lambda^2(\alpha^2+\beta^2)+c^2\alpha^2\beta^2\Bigr) x\\
& = \prod_{k=1}^4 (x-\eta_k),
\end{align*}
with
\begin{equation}\label{formulaEta}
\eta_1 = -\lambda-\frac{A+B}{2},\ \eta_2 = -\lambda+\frac{A-B}{2},\ \eta_2 = -\lambda-\frac{A-B}{2},\ \eta_4 = -\lambda+\frac{A+B}{2}.
\end{equation}
With this at hand, by means of Theorem \ref{teoremaMolteplicitaUnitarie} it is easy to calculate the solution to a fractional Cauchy problem associated with equation (\ref{equazioneQuartoOrdineFourierFrazionaria}).
\\For instance, in the case of initial conditions $F(0,\alpha,\beta) = 1$ and $\frac{\partial^l F}{\partial t^l}\big|_{t=0} = 0$ for all $l$ (whose values depend on $\nu$), the solution reads
\begin{align*}
 F_\nu&(t,\alpha, \beta)\\
 & = \Biggl(\lambda^2-\Bigl(\frac{A-B}{2}\Bigr)^2\Biggr)\Bigl(\lambda -\frac{A+B}{2} \Bigr) E_{\nu, 1}(\eta_1 t^\nu) + \Biggl(\lambda^2-\Bigl(\frac{A+B}{2}\Bigr)^2\Biggr)\Bigl(\lambda +\frac{A-B}{2} \Bigr) E_{\nu, 1}(\eta_2 t^\nu) \\
 & \ \ \ +\Biggl(\lambda^2-\Bigl(\frac{A+B}{2}\Bigr)^2\Biggr)\Bigl(\lambda -\frac{A-B}{2} \Bigr) E_{\nu, 1}(\eta_3 t^\nu) + \Biggl(\lambda^2-\Bigl(\frac{A-B}{2}\Bigr)^2\Biggr)\Bigl(\lambda +\frac{A+B}{2} \Bigr) E_{\nu, 1}(\eta_4 t^\nu)
 \end{align*}
with $\eta_i$ given in (\ref{formulaEta}).
\\In the case of initial conditions given by (\ref{condizioniMotoPianoDirezioniOrtogonali}) and $3/4<\nu\le1$ (so all the conditions are required), the solution reads
\begin{align*}
 F_\nu(t,\alpha, \beta) &= \frac{1}{4}\Biggl[ \Bigl(1 -\frac{\lambda}{A} \Bigr)\Bigl(1 -\frac{\lambda}{B} \Bigr) E_{\nu, 1}(\eta_1 t^\nu) +\Bigl(1 +\frac{\lambda}{A} \Bigr)\Bigl(1 -\frac{\lambda}{B} \Bigr)  E_{\nu, 1}(\eta_2 t^\nu) \\
 & \ \ \ +\Bigl(1 - \frac{\lambda}{A} \Bigr)\Bigl(1 + \frac{\lambda}{B} \Bigr)  E_{\nu, 1}(\eta_3 t^\nu) +\Bigl(1 +\frac{\lambda}{A} \Bigr)\Bigl(1 +\frac{\lambda}{B} \Bigr)  E_{\nu, 1}(\eta_4 t^\nu) \Biggr].
 \end{align*}
Note that for $\nu = 1$ this is the Fourier transform of the probability law of the orthogonal planar motion $(X,Y)$. This particular case can be also shown by considering the representation (\ref{decomposizioneMotoOrtogonale}) in terms of independent one-dimensional telegraph processes and their well-known Fourier transform (see for instance \cite{OB2004} formula (2.16)). 
\end{example}

\begin{example}[Planar motion with three directions]
Let us consider a planar random motion $(X,Y)$ governed by a homogeneous Poisson process with rate $\lambda>0$ and moving with velocities
\begin{equation}\label{velocitaMotoPianoLO}
v_0 = (c,0),v_1 = (-c/2,\sqrt{3}c/2),v_2 = (-c/2,-\sqrt{3}c/2), \ \ \text{ with } c>0.
\end{equation}
Let us assume that the particle starts moving with a uniformly chosen velocity among the three possible choices in (\ref{velocitaMotoPianoLO}) and at each Poisson event it uniformly selects the next one (including also the current one). This kind of motion are sometimes called as \textit{complete minimal planar random motion}, see \cite{CC2023, LO2004} for further details.
\\The support of the position of the stochastic dynamics at time $t\ge0$ is the triangle $T_{ct} =\{(x,y)\in\mathbb{R}^2\,:\,-ct/2\le x\le ct, (x-ct)/\sqrt{3}\le y\le (ct-x)/\sqrt{3} \}$  (see Figure \ref{MP3}). It is known that the probability distribution $p(x,y)\dif x \dif y = P\{X(t)\in\dif x, Y(t)\in\dif y\}$ of the position of the motion $(X,Y)$ satisfies the third-order differential equation
\begin{equation}\label{equazioneMotoPianoTreDirezioni}
\Bigl(\frac{\partial }{\partial t} + \frac{3\lambda}{2}\Bigr)^3 p -\frac{27\lambda^3}{8}+ \frac{27\lambda^2}{16} \frac{\partial p }{\partial t} -\frac{3}{4}c^2\Delta\Bigl(\frac{\partial }{\partial t} + \frac{3\lambda}{2}\Bigr)p-\frac{3}{4}c^3\frac{\partial^3 p}{\partial x\partial y^2}+\frac{c^3}{4}\frac{\partial^3 p}{\partial x^3}= 0,
\end{equation}
where $\Delta$ denotes the Laplacian operator. Note that equation (\ref{equazioneMotoPianoTreDirezioni}) can be derived by considering formula (1.9) of \cite{LO2004} and putting $3/2\lambda$ instead of $\lambda$ (this is sufficient by virtue of Remark 3.4 of \cite{CO2023}). 
\\The initial conditions of the Cauchy problem related to equation (\ref{equazioneMotoPianoTreDirezioni}) follow by suitably applying formulas (\ref{derivataZeroMotoAleatorio}), (\ref{derivataPrimaMotoAleatorio}) and (\ref{derivataSecondaMotoAleatorio}). In particular, for the Fourier transform of $p$, $F(t,\alpha,\beta) = \int_{\mathbb{R}^2} e^{i (\alpha x+ \beta y)}p(t,x,y)\dif x\dif y$, we derive
\begin{equation}\label{condizioniMotoPianoTreDirezioni}
F(0,\alpha,\beta) = 1,\ \ F_t(0,\alpha,\beta) =0,\ \ F_{tt}(0,\alpha,\beta) =-\frac{c^2}{2}\bigl(\alpha^2+\beta^2\bigr),
\end{equation}
obviously the first two conditions imply that $p(0,x,y) = \delta(x,y)$, with $\delta$ denoting the Dirac delta function centered in the origin, and $p_t(0,x,y) = 0\ \forall \ x,y$. We refer to \cite{LO2004} for further details about this motion, such as the explicit form of $p$ (see also \cite{CO2023b, Dc2002}).

Now, thanks to Theorem \ref{teoremaSubordinazione} we can easily give a probabilistic interpretation of the time-fractional version of order $\nu = 1/n, \ m\in\mathbb{N}$ of equation (\ref{equazioneMotoPianoTreDirezioni}), subject to the same initial conditions. Note that for $0<\nu\le1/3$ only the first condition is needed, for $1/3<\nu\le1/2$ the first two conditions are required and for $2/3<\nu\le1$ all three conditions are necessary.
\\In details, the fractional Cauchy problem after the Fourier transformation is given by
\begin{equation}\label{problemaFourierMotoPianoTreDirezioni}
\frac{\partial^3 F_\nu}{\partial t^3} +\frac{9\lambda}{2} \frac{\partial^2F_\nu }{\partial t^2} +\Biggl(\Bigl(\frac{3}{2}\Bigr)^4\lambda^2+\frac{3c^2(\alpha^2+\beta^2)}{4}\Biggr)\frac{\partial F_\nu}{\partial t} + \Bigl(\frac{9\lambda c^2(\alpha^2+\beta^2)}{8}+\frac{3ic^3\alpha\beta^2}{4}-\frac{ic^3\alpha^3}{4}\Bigr)F_\nu = 0
\end{equation}
subject to the initial conditions in (\ref{condizioniMotoPianoTreDirezioni}).
By means of Theorem \ref{teoremaSubordinazione} we have that the Fourier transform $F_{1/n}$ satisfying (\ref{problemaFourierMotoPianoTreDirezioni}) can be expressed as $ F_{1/n}(t,x) = \mathbb{E}\,F\Biggl(\,\prod_{j=1}^{n-1}G_{j}^{(n)}(t),\,x\Biggr),$ where $F$ denotes the solution to the Fourier-transformed problem of equation (\ref{equazioneMotoPianoTreDirezioni}). Thus, the time-fractional version of (\ref{equazioneMotoPianoTreDirezioni}), with $\nu = 1/n$ for natural $n$, describes the probability law of a stochastic process $(X_\nu,Y_\nu)$ that is a time-changed planar motion with velocities (\ref{velocitaMotoPianoLO}),
$$ \bigl(X_\nu(t),Y_\nu(t)\bigr) \stackrel{d}{=} \Biggl( X\Bigl(\prod_{j=1}^{n-1}G_{j}^{(n)}(t)\Bigr), Y\Bigl(\prod_{j=1}^{n-1}G_{j}^{(n)}(t)\Bigr) \Biggr),\ \ \ t\ge0.$$
\end{example}

\textbf{\large{Declarations}}
\\
\\
\textbf{Ethical Approval.} This declaration is not applicable.
 \\
\textbf{Competing interests.}  The authors have no competing interests to declare.
\\
\textbf{Authors' contributions.} Both authors equally contributed in the preparation and the writing of the paper.
 \\
\textbf{Funding.} The authors received no funding.
 \\
\textbf{Availability of data and materials.} This declaration is not applicable









\begin{thebibliography}{00}
%

\bibitem{BO2003}
Beghin, L., Orsingher, E. (2003), The telegraph process stopped at stable-distributed  times and its connection with the fractional telegraph equation, Fract. Calc. Appl. Anal 6, 187--204.

%
\bibitem{CM2008}
Chikriy, A.A., Matichin, I.I. (2008), Presentation of solutions of linear systems with fractional derivatives in the sense of Riemann-Liouville, Caputo, and Miller-Ross, Journal of Automation and Information Sciences 40(6), 1--11.

\bibitem{C2022}Cinque, F. (2022), On the sum of independent generalized Mittag-Leffler random variables and the related fractional processes, Stochastic Analysis and Applications 40(1), 103--117. \doi{10.1080/07362994.2021.1890120}
%
 
\bibitem{CO2023}
Cinque, F., Orsingher, E. (2023), Stochastic dynamics of generalized planar random motions with orthogonal directions, Journal of Theoretical Probability (published online). \doi{10.1007/s10959-022-01229-2}

\bibitem{CO2023b}
Cinque, F., Orsingher, E. (2023), Random motions in $\mathbb{R}^3$ with orthogonal directions, Stochastic Processes and their Applications 161, 173--200.

\bibitem{CC2023}
Cinque, F., Cintoli, M., Multidimensional random motions with a natural number of finite velocities. Preprint (in revision), 2023, arXiv:2306.01450.
%
%

\bibitem{Dc2002}
Di Crescenzo, A. (2002), Exact transient analysis of a planar motion with three directions, Stochastics and Stochastic Reports 72(3-4), 175--189.
%
%

\bibitem{DoOT2014}
D'Ovidio, M., Orsingher, E., Toaldo, B. (2014), Time-changed processed governed by space-time fractional telegraph equations, Stochastic Analysis and Applications 32(6), 1009--1045.
%

\bibitem{GKMR2014}
Gorenflo, R., Kilbas, A. A., Mainardi F., Rogosin S. V. (2014). \emph{Mittag-Leffler Functions, Related Topics and Applications}. Heidelberg: Springer. \doi{10.1007/978-3-662-43930-2}

%

\bibitem{K2016}
Khalaf, S.L. (2016), Analytic solution of linear fractional differential equations with constant coefficient, Mathematical Theory and Modeling 6(2), 135--154.
\bibitem{KM2004}
Kilbas, A.A., Marzan S.A. (2004), Cauchy problem for differential equation with Caputo derivative, Fractional Calculus and Applied Analysis 7(3), 297--321.	
%
%
%
%
%
%
\bibitem{K2017}
Kwaśnicki, M. (2017), Ten equivalent definitions of the fractional Laplace operator, Fractional Calculus and Applied Analysis 20(1), 7--51. 
%

\bibitem{LO2004}
Leorato, S., Orsingher, E. (2004), Bose-Einstein-type statistics, order statistics and planar random motions with three directions, Adv. Appl. Prob. 36, 937--970.

\bibitem{M2020}
Mainardi, F. (2020), Why the Mittag-Leffler Function Can Be Considered the Queen Function of the Fractional Calculus?, Entropy 22(12), 1359.

%
%

\bibitem{M2019}
Matychyn, I. (2019), Analytical solution of linear fractional systems with variable coefficients involving Riemann-Liouville and Caputo derivatives, Symmetry, 11, 1366.

%
%
%
%

\bibitem{OB2009}
Orsingher, E., Beghin, L. (2009), Fractional diffusions equations and processes with randomly varying time, Annals of Probability 37(1), 206--249.
%
%
%

\bibitem{OB2004}
Orsingher, E., Beghin, L. (2004), Time-fractional telegraph equations and telegraph processes with Brownian time. Probab. Theory Related Fields 128, 141--160.

\bibitem{OT2017}
Orsingher, E., Toaldo, B. (2017), Space-time fractional equations and related stable processes at random time, J. Theor. Probab. 30, 1--26.


\bibitem{P1990}
Pillai, R. N. (1990), On Mittag-Leffler functions and related distributions, Ann. Inst. Statist. Math. 42(1), 157--161. \doi{10.1007/BF00050786} 

\bibitem{P1994}
Podlubny, I. (1994), The Laplace transform method for linear differential equation of the fractional order, Inst. Exper. Plivsica, Slovak Arad. Sri.,
Prepr. IJEF-02-94. 1994. 32


\bibitem{P1999}
Podlubny, I. (1999), Fractional Diflerential Equations, Mathematics in Sciences and Engineering. Academic Press, San-Diego, 1999. 

\bibitem{P1971}
Prabhakar, T. R. (1971), A singular integral equation with a generalized Mittag-Leffler function in the kernel, Yokohama Math. J 19, 7--15. 
%
%
%
%
\bibitem{SKRs2011}
Saxena, R. K., Kalla, S. L., Ravi Saxena (2011), Multivariate analogue of generalized Mittag-Leffler function, Integral Transforms and Special Functions, 22(7), 533--548. \doi{10.1080/10652469.2010.533474}
%

\end{thebibliography}

\footnotesize{

}

\end{document}